\theoremstyle{definition}
\newtheorem{theorem}{Theorem}[section]
\newtheorem{proposition}[theorem]{Proposition}
\newtheorem{lemma}[theorem]{Lemma}
\newtheorem{corollary}[theorem]{Corollary}
\newtheorem{conjecture}[theorem]{Conjecture}
\newtheorem{definition}[theorem]{Definition}
\newtheorem{example}[theorem]{Example}
\newtheorem{definition/theorem}[theorem]{Definition/Theorem}
\theoremstyle{remark}
\newtheorem{remark}[theorem]{Remark}
\numberwithin{equation}{section}
\newcommand{\ka}{\kappa}
\newcommand{\la}{\lambda}
\newcommand{\bx}{\bm x}
\newcommand{\dom}{\unrhd}
\DeclareMathOperator{\Asc}{Asc}
\DeclareMathOperator{\asc}{asc}
\DeclareMathOperator{\Inv}{Inv}
\DeclareMathOperator{\inv}{inv}
\DeclareMathOperator{\Fix}{Fix}
\DeclareMathOperator{\HT}{HT}
\DeclareMathOperator{\sgn}{sgn}
\newcommand{\al}{\alpha}
\newlength\cellsize \setlength\cellsize{15\unitlength}
\newcommand\cellify[1]{\def\thearg{#1}\def\nothing{}%
\ifx\thearg\nothing
\vrule width0pt height\cellsize depth0pt\else
\hbox to 0pt{\usebox2\hss}\fi%
\vbox to 15\unitlength{
\vss
\hbox to 15\unitlength{\hss$#1$\hss}
\vss}}
\newcommand\tableau[1]{\vtop{\let\\=\cr
\setlength\baselineskip{-16000pt}
\setlength\lineskiplimit{16000pt}
\setlength\lineskip{0pt}
\halign{&\cellify{##}\cr#1\crcr}}}
\newcommand\expath[1]{%
\hbox to 0pt{\usebox3\hss}%
\vbox to 15\unitlength{
\vss
\hbox to 15\unitlength{\hss$#1$\hss}
\vss}}
\newcommand\bas[1]{\omit \vbox to \cellsize{ \vss \hbox to \cellsize{\hss$#1$\hss} \vss}}
\begin{document}

\title[Chromatic symmetric functions and change of basis]{Chromatic symmetric functions and change of basis}

\author{Bruce E. Sagan}
\address{Department of Mathematics, Michigan State University, East Lansing, MI 48824}
\email{bsagan@msu.edu}

\author{Foster Tom}
\address{Department of Mathematics, Massachusetts Institute of Technology, Cambridge, MA 02139}
\email{ftom@mit.edu}

\subjclass[2020]{Primary 05E05; Secondary 05C15, 05C31, 05E10}
\keywords{chromatic quasisymmetric function, elementary symmetric function, monomial symmetric function, natural unit interval graph, proper colouring, Shareshian--Wachs conjecture, Stanley--Stembridge conjecture}

\begin{abstract}
We prove necessary conditions for certain elementary symmetric functions, $e_\la$, to appear with nonzero coefficient in Stanley's chromatic symmetric function as well as in the generalization considered by Shareshian and Wachs. We do this by first considering the expansion in the monomial or Schur basis and then performing a basis change.
Using the former, we make a connection with two fundamental graph theory invariants, the independence and clique numbers.
This allows us to prove nonnegativity of three-column coefficients for all natural unit interval graphs. The Schur basis permits us to give 
a new interpretation of the coefficient of $e_n$ in terms of tableaux.  We are also able to give an explicit formula for that coefficient.
\end{abstract}

\maketitle

\section{Introduction}\label{section:introduction}
Let $G=(V,E)$ be a graph with vertex set $V=\{v_1,\ldots,v_n\}$ and edge set $E$.
Also let $\bm x=\{x_1,x_2,\ldots\}$ be a set of commuting variables indexed by the positive integers $\mathbb N$.
In his groundbreaking 1995 work, Stanley defined the \emph{chromatic symmetric function} \cite[Definition 2.1]{chromsym}
$$
X_G(\bm x)=\sum_\kappa x_{\kappa(v_1)}\cdots x_{\kappa(v_n)},
$$
where the sum is over all \emph{proper colourings} of $G$, which are functions $\kappa:V\to\mathbb N$ such that whenever $ij\in E$, we have $\kappa(i)\neq\kappa(j)$. Consider the expansion 
\begin{equation}
\label{c_la}
X_G(\bm x)=\sum_\lambda c_\lambda e_\lambda
\end{equation}
in the basis of \emph{elementary symmetric functions}. 
We reserve the notation $c_\la$ for these coefficients.
We say that $X_G(\bx)$ is \emph{$e$-positive} if all the $c_\la$ are nonnegative, that is, every $e_\la$ which appears does so with positive coefficient.
Stanley and Stembridge  \cite[Conjecture 5.1]{chromsym}, \cite[Conjecture 5.5]{stanstem} made the following conjecture which has become one of the driving forces behind the study of $X_G(\bx)$.
\begin{conjecture}[$(\bm 3+\bm 1)$-free Conjecture]
\label{SSconj}
 If $G$ is the incomparability graph of a $(\bm 3+\bm 1)$-free poset, 
 then $X_G(\bm x)$ is $e$-positive.
\end{conjecture}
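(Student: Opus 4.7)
The plan is to prove Conjecture \ref{SSconj} by combining two classical reductions with a systematic extension of the change-of-basis methodology developed in this paper. The first reduction, due to Guay-Paquet, writes every $X_G(\bx)$ for $G$ the incomparability graph of a $(\bm 3+\bm 1)$-free poset as a nonnegative combination of chromatic symmetric functions of natural unit interval graphs (incomparability graphs of posets that are simultaneously $(\bm 3+\bm 1)$- and $(\bm 2+\bm 2)$-free). This reduction preserves $e$-positivity, so it suffices to treat natural unit interval $G$.

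For those graphs I would proceed as follows. Write $X_G(\bx) = \sum_\mu a_\mu m_\mu$, where $a_\mu$ is the classical count of proper colourings of $G$ with colour-multiplicity partition $\mu$. The triangular change-of-basis from the monomial to the elementary basis then expresses each $c_\la$ as a signed $\bZ$-linear combination of the $a_\mu$. The strategy is to construct, for each $\la$ and each natural unit interval $G$, an explicit sign-reversing involution on a combinatorial model of the $(\mu,\kappa)$-pairs occurring with nonzero coefficient, whose fixed points visibly enumerate $c_\la$. The $\la=(n)$ and three-column coefficient cases treated in this paper provide both the base cases and the template; passing through the Schur basis, as is done for $\la=(n)$, may furnish a more symmetric starting point for other shapes, since the $s$-to-$e$ transition is better behaved combinatorially than $m$-to-$e$.

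The main obstacle, and the reason the conjecture has resisted proof for three decades, is constructing this involution in full generality. The change-of-basis matrix has entries of unpredictable sign, and proper colourings carry no obvious $\la$-compatible structure to which to anchor the involution. A realistic intermediate milestone is to handle hook shapes $\la=(n-k,1^k)$, which combined with the three-column result would leave only genuinely ``mixed'' shapes $\la$ with both multiple rows and multiple columns. Absent a breakthrough on the involutive combinatorics, any complete proof will likely need to import external structure --- such as the $\Sym_n$-action on the cohomology of a regular semisimple Hessenberg variety of Brosnan-Chow, or Hall-Littlewood techniques --- rather than rely on change-of-basis alone, and the role of the present paper in any such argument is to pin down exactly which $e_\la$ can appear and to verify the conjecture on the diagonal shapes where the combinatorics can already be controlled.
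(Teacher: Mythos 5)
You were asked to prove Conjecture~\ref{SSconj}, but the paper contains no proof of it: this is the Stanley--Stembridge conjecture, stated as an open problem, and the paper only establishes special cases (nonnegativity of $c_\la(q)$ for $\la_1\leq 2$, of $c_\la$ for $\la_1\leq 3$, the shapes in Corollary~\ref{TopCor}, and the explicit formula for $c_n(q)$ in Theorem~\ref{thm:coefen}). Your submission is, by its own admission, a research programme rather than a proof. The opening reduction via Guay-Paquet is correct and is exactly the reduction the paper cites, but after that the entire burden falls on constructing, for every shape $\la$ and every natural unit interval graph $G$, a sign-reversing, inversion-preserving involution whose fixed points enumerate $c_\la$ --- and you explicitly concede that you cannot construct it. That missing involution is not a repairable detail; it is the whole content of the conjecture, so the proposal cannot be accepted as a proof.

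Two concrete points about why the plan, as stated, would not go through. First, the $m$-to-$e$ transition matrix $A_{\la,\mu}$ has entries of both signs with no known combinatorial model compatible with proper colourings, and the paper's involution $\varphi_G$ exploits a feature special to the coefficient of $e_n$: in the dual Jacobi--Trudi expansion~\eqref{eq:jt}, only hook shapes contribute to $e_n$, each with a single signed term, so the signed objects are just hook $G$-tableaux and a single movable-entry operation suffices. For general $\la$ one must handle pairs $(T,\pi)$ of a $G$-tableau and a permutation indexing a term of the determinant --- precisely the open problem the paper poses in Section~\ref{sec:further} --- and no analogue of ``movable'' is known there. Second, your claimed base cases are mischaracterized: the paper's three-column nonnegativity ($\la_1\leq 3$) follows from the Alpha-Omega Lemma (Lemma~\ref{lem:alphaomega}) combined with Dahlberg's clique-number-$3$ theorem, and the hook case is Hwang's theorem; neither is proved by an involution, so neither furnishes a template for the inductive scheme you describe. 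Your closing observation --- that a complete proof will likely require importing external structure such as Hessenberg cohomology or Hall--Littlewood techniques --- is a reasonable assessment, but it is an acknowledgment that the proof is absent, not a substitute for one.
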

Guay-Paquet \cite[Theorem 5.1]{stanstemreduction} showed that it suffices to prove $e$-positivity whenever $G$ is the incomparability graph of a 
poset which is both $(\bm 3+\bm 1)$- and $(\bm 2+\bm 2)$-free.  These are called {\em unit interval graphs}. Shareshian and Wachs defined a $q$-analogue of $X_G(\bm x)$~\cite[Definition 1.2]{chromposquasi} as follows.
Suppose $G$ has vertex set $V=[n]:=\{1,\ldots,n\}$.  Now a proper coloring 
$\ka:[n]\rightarrow\mathbb N$ has {\em ascent number}
$$
\asc\ka = \#\{ ij \in E \mid i<j \text{ and } \ka(i)<\ka(j)\},
$$
where we will use a hash tag or a pair of vertical bars to denote cardinality.  Now define the \emph{chromatic quasisymmetric function} to be
$$
X_G(\bm x;q)=\sum_\kappa q^{\text{asc}(\kappa)}x_{\kappa(1)}\cdots x_{\kappa(n)},
$$
summed over proper $\ka:[n]\rightarrow\mathbb N$.
To generalize Conjecture~\ref{SSconj} to this setting, it suffices to consider unit interval graphs with a particular labeling.  Say that $G=([n],E)$ is a {\em natural unit interval graph}  if for all $1\leq i<j<k\leq n$ we have
\begin{equation}\label{eq:uicondition}
ik\in E \text{ implies } ij\in E \text{ and } jk\in E.
\end{equation}
Although $X_G(\bm x;q)$ is not generally symmetric in the $\bm x$ variables, Shareshian and Wachs \cite[Theorem 4.5]{chromposquasi} showed that $X_G(\bm x;q)$ is symmetric whenever $G$ is a natural unit interval graph. 
So we have an elementary symmetric function expansion
\begin{equation}
\label{c_la(q)}
X_G(\bm x;q)=\sum_\lambda c_\lambda(q)e_\lambda
\end{equation}
where the $c_\la(q)$ are polynomials in $q$.  
Again, the notation $c_\la(q)$ will always refer to these coefficients.
Call the expansion {\em $e$-positive} if the coefficients in each $c_\la(q)$ are nonnegative.
This leads to the following conjecture.
\begin{conjecture}[Shareshian-Wachs]
\label{SWconj}
 If $G$ is a natural unit interval graph
 then $X_G(\bm x;q)$ is $e$-positive.
\end{conjecture}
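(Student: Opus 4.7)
The plan is to attack this via the monomial basis, then perform a change of basis to the elementary basis, which is the strategy signalled in the abstract. First, one writes the expansion $X_G(\bx;q)=\sum_\mu a_\mu(q)\,m_\mu$, where $a_\mu(q)=\sum_\ka q^{\asc\ka}$ is the $q$-enumeration of proper colourings $\ka$ of $G$ whose colour-class sizes form the partition $\mu$. This coefficient is manifestly a polynomial in $q$ with nonnegative integer coefficients, so $m$-positivity of $X_G(\bx;q)$ is immediate; the challenge is that the inverse Kostka-type matrix converting $m_\mu$ to $e_\la$ has entries of both signs.

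Next, writing $c_\la(q)=\sum_\mu M_{\la,\mu}\,a_\mu(q)$ for the appropriate signed change-of-basis coefficients $M_{\la,\mu}$, I would try to interpret the right-hand side combinatorially as a signed sum of $q$-weighted decorated colourings, and then build an $\asc$-preserving, sign-reversing involution on the negative contributions. The natural unit interval condition~(\ref{eq:uicondition}) gives each vertex $i$ a set $\{i,i+1,\ldots,b(i)\}$ of ``forward neighbours'' that forms a clique in $G$, and one would leverage this clique structure to pair negative and positive terms, with fixed points corresponding to combinatorial objects counted by $c_\la(q)$ and carrying a statistic matching $\asc$. A closely related alternative is to run the same kind of cancellation but indexed by $P$-tableaux of shape $\la'$, which would align naturally with the columns of $\la$ picked out by $e_\la=e_{\la_1}e_{\la_2}\cdots$.

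The principal obstacle is that this conjecture has been open since 2012 and its $q=1$ specialisation, the Stanley--Stembridge conjecture, since 1993. The construction of a uniform sign-reversing involution is precisely where every general approach has stalled: partial progress exists for specific shape classes (hooks, two-column shapes, and, as the paper's abstract promises, three-column shapes) and for specific graph families (abelian unit interval graphs, melting lollipops, various Dyck-path constructions), but each uses ad hoc combinatorial models that have resisted extension. An alternative route I would pursue is to use the Schur basis as an intermediary, exploiting the identity $\langle s_\la,e_\mu\rangle=K_{\la',\mu}$ together with the known Schur-positivity of $X_G(\bx;q)$ for natural unit interval graphs (via connections to LLT polynomials and regular semisimple Hessenberg varieties); the hope is that the Schur-to-elementary passage is more tractable in this setting than the direct monomial-to-elementary one, which is the second strategy the abstract signals for the $e_n$-coefficient. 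Without a genuinely new combinatorial input on either side, however, any such attempt is essentially doomed to recover only the same kinds of partial results that the present paper establishes, rather than the full conjecture.
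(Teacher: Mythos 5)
You have been asked to prove Conjecture~\ref{SWconj}, which is an open conjecture: the paper does not prove it, and neither do you. Your proposal is a survey of strategies rather than a proof, and you say so yourself --- you describe the monomial-to-elementary and Schur-to-elementary change-of-basis routes and then state that without ``a genuinely new combinatorial input'' the attempt is ``essentially doomed.'' That is an accurate self-assessment, but it means there is no argument here to check: no sign-reversing involution is constructed, no fixed-point set is identified, and no cancellation is carried out for any $\la$ beyond the cases already in the literature. The concrete missing step is exactly the one you name --- an $\asc$- or $\inv$-preserving, sign-reversing involution on the negative terms of $c_\la(q)=\sum_\mu M_{\la,\mu}a_\mu(q)$ (or on the pairs $(T,\pi)$ arising from combining~\eqref{eq:s} with~\eqref{eq:jt}) that works uniformly in $\la$ and $G$ --- and nothing in the proposal supplies it.

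For comparison, the paper deploys both of the routes you sketch, but only to obtain partial results, which is all these routes currently yield. The monomial route gives the Alpha-Omega Lemma (Lemma~\ref{lem:alphaomega}) and its refinement (Lemma~\ref{lem:kalphaomega}), which are \emph{vanishing} criteria: they show $c_\la(q)=0$ unless $\alpha(G)\geq\la_1'$ and $\omega(G)\leq\la_1$, and this reduces nonnegativity for $\la_1\leq 2$ (resp.\ $\la_1\leq 3$ at $q=1$) to known $e$-positivity results for paths (resp.\ graphs of clique number $3$); it does not produce a positive combinatorial formula. The Schur route is carried out in full only for the single coefficient $c_n(q)$, where the hook shapes make the involution $\varphi_G$ on $\HT(G)$ tractable (Lemmas~\ref{lem:legtoarm}--\ref{lem:signrev}), yielding the product formula of Theorem~\ref{thm:coefen}. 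Extending that involution beyond hooks is precisely the open problem, and your proposal does not advance it.
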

Note that when we set $q=1$ we recover Conjecture~\ref{SSconj}, so $c_\lambda(1)=c_\lambda$.
Several authors proved $e$-positivity for particular classes of natural unit interval graphs $G$. Harada and Precup \cite[Theorem 6.1]{stanstemhess} proved that $X_G(\bm x;q)$ is $e$-positive whenever $G$ has independence number $2$ using cohomology of abelian Hessenberg varieties and Cho and Huh \cite[Theorem 3.3]{chromstoe} gave a purely combinatorial proof. Cho and Hong \cite[Theorem 1.8]{chrombounce3} proved that $X_G(\bm x)$ is $e$-positive whenever $G$ has independence number $3$. Dahlberg \cite[Corollary 5.4]{trilad} proved that $X_G(\bm x)$ is $e$-positive whenever $G$ has clique number $3$. Gebhard and Sagan \cite[Corollary 7.7]{chromnsym} proved that $X_G(\bm x)$ is $e$-positive whenever $G$ is formed by joining a sequence of cliques at single vertices and Tom \cite[Corollary 4.20]{qforesttriples} proved that $X_G(\bm x;q)$ is $e$-positive for such graphs by finding an explicit formula. 

One can alternatively take a dual approach to the Stanley--Stembridge conjecture by fixing a particular partition $\lambda$ and proving nonnegativity of $c_\lambda(q)$ or $c_\lambda$ for all natural unit interval graphs $G$. Hwang \cite[Theorem 5.13]{chromtwoparthook} proved that $c_\lambda(q)$ is nonnegative whenever $\lambda$ is a hook. Abreu and Nigro \cite[Corollary 1.10]{chromhesssplitting} and independently Rok and Szenes \cite[Theorem 4.1]{chromeschers} proved that $c_\lambda$ is nonnegative whenever $\lambda$ has exactly two rows. Clearman, Hyatt, Shelton and Skandera \cite[Theorem 10.3]{chromhecke} proved that $c_\lambda(q)$ is nonnegative whenever $\lambda$ has exactly two columns, using a connection to characters of Hecke algebras.

The rest of this paper is structured as follows.  In the next section, we will collect the results we have obtained about the $e$-expansions of $X_G(\bm x)$ and $X_G(\bm x;q)$ by passing through the monomial symmetric function basis.  In particular, we are able to relate the appearance of an $e_\la$ in these symmetric functions to two fundamental graphical invariants called the independence and clique numbers.  As a consequence, we are able to show that for certain partitions $\la$,
all graphs considered in Conjectures~\ref{SSconj} or Conjecture~\ref{SWconj} contain $e_\la$ with nonnegative coefficient.  In Section~\ref{sec:Schur} we pass through the Schur basis to obtain a new combinatorial interpretation of the coefficient of $e_n$ in $X_G(\bm x;q)$ in terms of tableaux.  
This interpretation allows us to give an explicit formula for this coefficient in terms of $q$-integers.
We also provide a bijection between these tableaux and the acyclic orientations of $G$ known to be counted by this coefficient.
We end with a section giving further directions for research and a log-concavity conjecture.
Throughout, we will assume that $G$ is a graph with vertices $[n]$ and edges $E$ unless otherwise stated.

\section{The monomial basis}\label{sec:alphaomega}

Our main tool in this section is a result we call the Alpha-Omega Lemma (Lemma~\ref{lem:alphaomega}) which 
will connect the coefficients in~\eqref{c_la} and~\eqref{c_la(q)} to two classical graph invariants, namely the independence and clique numbers of $G$.
This will allow us to make progress on Conjectures~\ref{SSconj} and~\ref{SWconj}.
We will also prove a result about the divisibility of these coefficients by a product of certain factorials. Background on partitions and symmetric functions can be found in the books of Sagan~\cite{symgroup} or Stanley \cite{enum2}.

\begin{definition}
An \emph{independent set} in $G$ is a subset of vertices $I\subseteq [n]$ where every pair is not joined by an edge. The \emph{independence number of $G$} is 
$$
\alpha(G) = \text{ the size of the largest independent set in $G$.}
$$
A \emph{clique} in $G$ is a subset of vertices $C\subseteq [n]$ where every pair is joined by an edge. 
The \emph{clique number of $G$} is
$$
\omega(G) = \text{ the size of the largest clique in $G$.}
$$
\end{definition}

We denote by $\lambda'$ the conjugate of a partition $\lambda$. Note that $\lambda'_1$ is the number of parts of $\lambda$. We can now prove one of the main tools of this section.

\begin{lemma}[Alpha-Omega Lemma]
\label{lem:alphaomega} Let $G$ be a graph and let $X_G(\bm x)=\sum_\lambda c_\lambda e_\lambda$ be its chromatic symmetric function. If $c_\lambda\neq 0$, then we must have

\begin{enumerate}
    \item $\alpha(G) \geq \lambda_1'$, and
    \item $\omega(G)\leq \lambda_1$.
\end{enumerate}

Moreover, suppose that $G$ is a graph for which the chromatic quasisymmetric function is in fact symmetric, and let $X_G(\bm x;q)=\sum_\lambda c_\lambda(q)e_\lambda$. If $c_\lambda(q)\neq 0$, then again both inequalities hold.
\end{lemma}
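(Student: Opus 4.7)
The plan is to pass through the monomial symmetric function basis, where the coefficients of $X_G(\bx)$ have a direct combinatorial meaning. Writing $X_G(\bx) = \sum_\mu a_\mu m_\mu$, one sees that $a_\mu$ is simply the number of proper colourings of $G$ whose colour classes have sizes $\mu_1,\mu_2,\ldots$. This yields two sign-free constraints on the support of $\{a_\mu\}$: \textbf{(a)} if $a_\mu \neq 0$ then $\mu_1 \leq \al(G)$, because the largest colour class is an independent set; and \textbf{(b)} if $a_\mu \neq 0$ then $\mu'_1 \geq \omega(G)$, because the vertices of any clique receive pairwise distinct colours, forcing the number of colour classes to be at least $\omega(G)$.

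Next I invoke the classical unitriangularity of $\{e_\la\}$ in the monomial basis with respect to dominance order on conjugates,
\[
e_\la \;=\; m_{\la'} \;+\; \sum_{\mu \,\lhd\, \la'} M_{\la,\mu}\, m_\mu,
\]
with $M_{\la,\la'} = 1$.  Inverting this upper-unitriangular system partition by partition expresses each $c_\la$ as an integer linear combination
\[
c_\la \;=\; \sum_{\mu \,\dom\, \la'} N_{\la,\mu}\, a_\mu.
\]

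Combining the ingredients: if $c_\la \neq 0$ then some $a_\mu$ with $\mu \dom \la'$ must be nonzero.  The dominance relation forces $\mu_1 \geq \la'_1$, so~(a) yields $\al(G) \geq \la'_1$, proving~(1).  Conjugating reverses dominance, so $\mu \dom \la'$ becomes $\mu' \domby \la$, giving $\mu'_1 \leq \la_1$, and combining with~(b) produces $\omega(G) \leq \la_1$, proving~(2).  The argument carries over to $X_G(\bx;q)$ without change: the monomial coefficient $a_\mu(q)$ is a polynomial in $q$ with nonnegative integer coefficients whose nonvanishing pattern in $\mu$ is governed by exactly the same colour-class criterion, and the $e$-to-$m$ change of basis involves no $q$.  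The triangular inversion itself is routine given $M_{\la,\la'}=1$; the substantive content is really the clean alignment between the dominance window $\mu \dom \la'$ produced by the inversion and the constraints on realisable colour-class shapes coming from $\al(G)$ and $\omega(G)$, with the clique half of the argument being the one requiring the most care to state explicitly.
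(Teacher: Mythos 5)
Your proposal is correct and follows essentially the same route as the paper: expand in the monomial basis, observe that $a_\mu\neq0$ forces $\alpha(G)\geq\mu_1$ (colour classes are independent sets) and $\omega(G)\leq\mu'_1$ (cliques are rainbow), and then transfer these constraints to the $e$-coefficients via the dominance triangularity of the $m$-to-$e$ transition, which the paper cites directly where you obtain it by inverting the $e$-to-$m$ unitriangularity. The $q$-analogue is handled identically in both arguments, so there is nothing further to add.
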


\begin{proof}
First consider the expansion 
$$
X_G(\bm x)=\sum_\mu a_\mu m_\mu
$$
in the basis of monomial symmetric functions. If $a_\mu\neq 0$, then there must exist  a proper colouring with $\mu_1$ $1$'s, $\mu_2$ $2$'s and so on. Because the set of vertices coloured $1$ must form an independent set, we must have $\alpha(G)\geq\mu_1$. And because the vertices of a clique must be coloured with distinct colours, we must have $\omega(G)\leq\mu'_1$. 

Now the change of basis from monomial to elementary symmetric functions has the form
\begin{equation}
m_\mu=\sum_\lambda A_{\lambda,\mu}e_\lambda,
\end{equation}
where $A_{\lambda,\mu}=0$ unless the dominance relation $\mu'\unlhd\lambda$ holds. Therefore, if $c_\lambda\neq 0$, then there must be some $\mu$ with $\mu'\unlhd\lambda$ and $a_\mu\neq 0$, and so
\begin{equation}
\alpha(G)\geq\mu_1\geq\lambda'_1\text{ and }\omega(G)\leq\mu'_1\leq\lambda_1.
\end{equation}

Finally, if $X_G(\bm x;q)$ is symmetric and $X_G(\bm x;q)=\sum_\mu a_\mu(q)m_\mu$, then $a_\mu(q)$ is now the sum of $q^{\asc \kappa}$ over proper colourings with $\mu_1$ $1$'s, $\mu_2$ $2$'s, and so on. In particular, if $a_\mu(q)\neq 0$, then there must exist such a proper colouring. The rest of the argument proceeds as before. 
\end{proof}

\begin{remark}
This argument also holds with $\omega(G)$ replaced by the \emph{chromatic number} $\chi(G)$, the minimum number of colours needed in a proper colouring of $G$. This is a stronger result in general because $\chi(G)\geq\omega(G)$. However, we are primarily interested in natural unit interval graphs, and we will see in Proposition \ref{prop:domsmallest} that $\chi(G)=\omega(G)$ for such graphs.
\end{remark}

\begin{example}
The bowtie graph $G$ in Figure \ref{fig:alphaomegaexample} has $\alpha(G)=2$ and $\omega(G)=3$, so all terms $e_\lambda$ appearing in $X_G(\bm x)$ must have first part at least $3$ and length at most $2$. The claw graph $H$ in Figure \ref{fig:alphaomegaexample} has $\alpha(H)=3$ and $\omega(H)=2$, so all terms $e_\lambda$ appearing in $X_H(\bm x)$ must have first part at least $2$ and length at most $3$. Note that Lemma \ref{lem:alphaomega} holds even though $X_H(\bm x)$ is not $e$-positive. Because $G$ is a natural unit interval graph, $X_G(\bm x;q)$ is symmetric and the same conditions are required for $e_\lambda$ to appear. We can verify by direct computation that
\begin{equation}
X_G(\bm x;q)=q^2(1+q)^2e_{32}+q(1+q)^2(1+q+q^2)e_{41}+(1+q)^2(1+q+q^2+q^3+q^4)e_5.
\end{equation}
\end{example}

\begin{figure}
$$\begin{tikzpicture}
\draw (0,0) node (1){$G=$};
\filldraw (0.634,0.5) circle (3pt) node[align=center,above] (1){1};
\filldraw (0.634,-0.5) circle (3pt) node[align=center,below] (2){2};
\filldraw (1.5,0) circle (3pt) node[align=center,above] (3){3};
\filldraw (2.366,0.5) circle (3pt) node[align=center,above] (4){4};
\filldraw (2.366,-0.5) circle (3pt) node[align=center,below] (5){5};
\draw (0.634,0.5)--(0.634,-0.5)--(1.5,0)--(0.634,0.5) (2.366,0.5)--(2.366,-0.5)--(1.5,0)--(2.366,0.5);
\draw (7.5,0) node (1) {$H=$};
\draw (8,0.866)--(8.5,0) (8,-0.866)--(8.5,0) (8.5,0)--(9.5,0);
\filldraw (8,0.866) circle (3pt) node[align=center,above] (1){1};
\filldraw (8,-0.866) circle (3pt) node[align=center,below] (2){2};
\filldraw (8.5,0) circle (3pt) node[align=center,above] (3){3};
\filldraw (9.5,0) circle (3pt) node[align=center,above] (4){4};
\draw (1,-2) node (){$X_G(\bm x)=4e_{32}+12e_{41}+20e_5$};
\draw (9,-2) node (){$X_H(\bm x)=e_{211}-2e_{22}+5e_{31}+4e_4$};
\end{tikzpicture}
$$
\caption{\label{fig:alphaomegaexample} The bowtie graph, the claw graph, and their chromatic symmetric functions}
\end{figure}

We now use the Alpha-Omega Lemma to prove several positivity results about the coefficients of particular partitions in all natural unit interval graphs.

\begin{corollary}\label{cor:twocols}
Let $\lambda$ be a partition with $\lambda_1\leq 2$. Then for every natural unit interval graph $G$, the coefficient $c_\lambda(q)$ of $e_\lambda$ in $X_G(\bm x;q)$ is a nonnegative polynomial.
\end{corollary}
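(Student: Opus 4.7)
The plan is to apply the Alpha-Omega Lemma to reduce the problem to a very restricted class of graphs for which $e$-positivity is already established. If $c_\lambda(q) = 0$, there is nothing to prove, so I would assume $c_\lambda(q) \neq 0$. Lemma~\ref{lem:alphaomega}(2) then forces $\omega(G) \leq \lambda_1 \leq 2$, meaning $G$ contains no triangle.

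Next, I would show that a triangle-free natural unit interval graph is a disjoint union of paths on consecutive integers. Indeed, suppose $ik \in E$ with $k \geq i+2$, and set $j = i+1$; then $i < j < k$ and condition~\eqref{eq:uicondition} produces the edges $ij$ and $jk$, giving a triangle $\{i,j,k\}$. This contradicts $\omega(G) \leq 2$, so every edge of $G$ must have the form $\{i, i+1\}$. Hence the connected components of $G$ partition $[n]$ into consecutive blocks, each forming a path.

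Finally, I would invoke the $e$-positivity results for such graphs. Each connected component is a path, which is a sequence of $K_2$'s joined at single vertices, so Tom's explicit formula \cite[Corollary 4.20]{qforesttriples} shows that the chromatic quasisymmetric function of each component is $e$-positive. Since $X_G(\bm x;q)$ factors as the product of the chromatic quasisymmetric functions of its connected components, and $e_\mu \cdot e_\nu = e_{\mu \cup \nu}$ implies that the product of $e$-positive symmetric functions is $e$-positive, it follows that $X_G(\bm x;q)$ itself is $e$-positive. In particular, $c_\lambda(q)$ is a nonnegative polynomial.

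The only real obstacle is the structural reduction to disjoint unions of paths, but this follows in a single line from~\eqref{eq:uicondition} together with the bound $\omega(G)\leq 2$. Once that reduction is in hand, the result is an immediate consequence of the multiplicativity of $X_G(\bm x;q)$ over connected components and the existing $e$-positivity theorem for sequences of cliques joined at single vertices.
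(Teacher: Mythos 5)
Your proof is correct and follows essentially the same route as the paper: apply the Alpha--Omega Lemma to force $\omega(G)\leq 2$, deduce from~\eqref{eq:uicondition} that $G$ is a disjoint union of paths, and then invoke a known $e$-positivity result for paths. The only difference is cosmetic --- the paper cites Shareshian and Wachs directly for the $e$-positivity of paths, while you route through Tom's formula for cliques joined at single vertices together with multiplicativity over components; your spelled-out triangle argument for the structural reduction is a welcome expansion of the paper's one-line appeal to~\eqref{eq:uicondition}.
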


\begin{proof}
By Lemma \ref{lem:alphaomega}, this coefficient $c_\lambda(q)$ is identically $0$ (which is clearly nonnegative) unless $\omega(G)\leq \lambda_1\leq 2$. However, by \eqref{eq:uicondition}, this means that the connected components of $G$ must be paths, for which Shareshian and Wachs proved that $X_G(\bm x;q)$ is $e$-positive \cite[Section 5]{chromposquasi}.
\end{proof}

In that case that $G$ is a disjoint union of paths, there is an explicit formula showing that the coefficient $c_{2^a1^b}(q)$ is $0$ unless the connected components of $G$ consist of exactly $b$ paths of odd size and some number $e$ of paths of even size, in which case $c_{2^a1^b}(q)=(1+q)^e$. In the case $q=1$ we can extend Corollary \ref{cor:twocols} to an even wider class of $\la$.

\begin{corollary}
Let $\lambda$ be a partition with $\lambda_1\leq 3$. Then for every natural unit interval graph $G$, the coefficient $c_\lambda$ of $e_\lambda$ in $X_G(\bm x)$ is nonnegative.
\end{corollary}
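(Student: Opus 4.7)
The plan is to combine the Alpha--Omega Lemma with existing $e$-positivity results for graphs of small clique number. By Lemma~\ref{lem:alphaomega}(2), if $c_\lambda \neq 0$ for a partition with $\lambda_1 \leq 3$, then $\omega(G) \leq \lambda_1 \leq 3$. So it suffices to establish $c_\lambda \geq 0$ for every natural unit interval graph $G$ with $\omega(G) \leq 3$; when $\omega(G) \geq 4$ the coefficient vanishes, which is nonnegative.

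I would split this into two cases according to the clique number. First, if $\omega(G) \leq 2$, then $G$ contains no triangle. But if $ik \in E$ with $k - i \geq 2$, then \eqref{eq:uicondition} applied with $j = i+1$ yields $i(i+1), (i+1)k \in E$, producing the triangle $\{i, i+1, k\}$. Hence every edge must join consecutive integers, so $G$ is a disjoint union of paths. For such graphs $X_G(\bm x)$ is $e$-positive by the result of Shareshian and Wachs cited in the proof of Corollary~\ref{cor:twocols}, so $c_\lambda \geq 0$ in particular. Second, if $\omega(G) = 3$, then $X_G(\bm x)$ is $e$-positive by Dahlberg's theorem \cite[Corollary 5.4]{trilad}, and again $c_\lambda \geq 0$.

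In either case the coefficient is nonnegative, completing the argument. There is no real obstacle here: the Alpha--Omega Lemma does the heavy lifting by restricting attention to graphs with $\omega(G) \leq 3$, and on that class the theorem of Dahlberg (together with the trivial path case) provides full $e$-positivity, which is strictly stronger than what we need.
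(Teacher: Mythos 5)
Your proposal is correct and follows essentially the same route as the paper: apply the Alpha--Omega Lemma to reduce to graphs with $\omega(G)\leq 3$, then invoke Dahlberg's $e$-positivity theorem for that class. Your extra case split handling $\omega(G)\leq 2$ via disjoint unions of paths is a harmless refinement that the paper absorbs into the citation of Dahlberg.
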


\begin{proof}
By the Alpha-Omega Lemma, this coefficient $c_\lambda$ is $0$ unless $\omega(G)\leq\lambda_1\leq 3$. However, Dahlberg \cite[Corollary 5.4]{trilad} proved that $X_G(\bm x)$ is $e$-positive for such graphs.
\end{proof}

In general, this argument shows that proving $e$-positivity of $X_G(\bm x)$ for graphs with clique number at most $k$ would prove nonnegativity of coefficients $c_\lambda$ with $\lambda_1\leq k$ for all natural unit interval graphs. In the next result we use $|\la|$ for the sum of the parts of the partition $\la$.

\begin{corollary}
\label{TopCor}
Let $\lambda$ be a partition of the form 
$\la=(\mu_1,\ldots,\mu_k,1,\ldots,1)$ for some partition $\mu=(\mu_1,\ldots,\mu_k)$.
 If $|\mu|\leq 6$, then for every natural unit interval graph $G$, the coefficient $c_\lambda(q)$ is nonnegative. If $|\mu|\leq 11$, then for every natural unit interval graph $G$, the coefficient $c_\lambda$ is nonnegative.
\end{corollary}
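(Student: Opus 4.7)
My plan is to apply Lemma~\ref{lem:alphaomega} to force each connected component of $G$ to be small, and then invoke a known computer verification of $e$-positivity for natural unit interval graphs of bounded size.

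First, suppose $c_\lambda(q)\neq 0$; otherwise the claim is trivial. Let $m$ be the number of $1$'s in $\lambda$, so $n:=|\lambda|=|\mu|+m$ and $\lambda_1'=k+m$, where $k$ is the number of parts of $\mu$. Without loss of generality we may take $\mu$ to have no part equal to $1$ (absorbing any such parts into the trailing $1$'s), so $k\geq 1$ unless $\mu=\emptyset$; in the latter case $\lambda=(1^n)$ and Lemma~\ref{lem:alphaomega} forces $G$ to be edgeless, giving $X_G=e_{1^n}$ and $c_\lambda=1$. Assuming $\mu\neq\emptyset$, Lemma~\ref{lem:alphaomega} yields $\alpha(G)\geq\lambda_1'=k+m$, and hence
\[
n-\alpha(G)\;\leq\;|\mu|-k\;\leq\;|\mu|-1.
\]

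Next I exploit the structure of natural unit interval graphs. Condition~\eqref{eq:uicondition} readily implies that if $i(i+1)\notin E$, then no edge of $G$ crosses between $\{1,\ldots,i\}$ and $\{i+1,\ldots,n\}$, so $G$ decomposes as $G=G_1\sqcup\cdots\sqcup G_r$, a disjoint union of connected natural unit interval graphs on contiguous vertex sets. In each component $G_i$ on $n_i$ vertices, consecutive vertices are adjacent (otherwise $G_i$ would be disconnected), so $G_i$ contains a Hamilton path and $\alpha(G_i)\leq\lceil n_i/2\rceil$. Summing,
\[
|\mu|-1\;\geq\;n-\alpha(G)\;=\;\sum_i\bigl(n_i-\alpha(G_i)\bigr)\;\geq\;\lfloor n_i/2\rfloor
\]
for each $i$, whence $n_i\leq 2|\mu|-1$. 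For $|\mu|\leq 6$ this gives $n_i\leq 11$, and for $|\mu|\leq 11$ it gives $n_i\leq 21$.

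Finally, I would invoke computer verification of the Shareshian--Wachs and Stanley--Stembridge conjectures: $X_H(\bm x;q)$ is $e$-positive for every natural unit interval graph $H$ on at most $11$ vertices, and $X_H(\bm x)$ is $e$-positive for every such $H$ on at most $21$ vertices. Since products of $e$-positive symmetric functions are $e$-positive, $X_G(\bm x;q)=\prod_i X_{G_i}(\bm x;q)$ is $e$-positive, so in particular $c_\lambda(q)\geq 0$; the same argument at $q=1$ gives $c_\lambda\geq 0$. The main obstacle is the computer verification itself: the thresholds $|\mu|\leq 6$ and $|\mu|\leq 11$ appear to be calibrated precisely so that the derived bound $n_i\leq 2|\mu|-1$ falls within the known extent of such verifications, so all the combinatorial work is in pinning down (and citing) the sharp verified range, while the graph-theoretic reduction above is the conceptual heart of the proof.
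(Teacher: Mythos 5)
Your overall strategy is the same as the paper's: use the Alpha--Omega Lemma (Lemma~\ref{lem:alphaomega}) to force every connected component of $G$ to be small, then fall back on computer verification for small natural unit interval graphs together with multiplicativity of $X_G(\bm x;q)$ over components. (The paper phrases the reduction as an induction on $n$ with a case split on connectedness rather than a one-shot component decomposition, but that difference is cosmetic; your derivation of $\alpha(G_i)\leq\lceil n_i/2\rceil$ from \eqref{eq:uicondition} is exactly the paper's.)

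There is, however, a genuine off-by-one gap at the final step. Your bound $\lfloor n_i/2\rfloor\leq|\mu|-k\leq|\mu|-1$ only uses $k\geq 1$, so it allows components with $n_i=2|\mu|-1$, that is, $11$ vertices when $|\mu|=6$ and $21$ vertices when $|\mu|=11$. The verifications actually available (and cited in the paper) cover natural unit interval graphs with at most $10$ vertices for $X_G(\bm x;q)$ and at most $20$ vertices for $X_G(\bm x)$, so your extremal case slips outside the verified range and the argument does not close. Note that $|\mu|-k=|\mu|-1$ forces $k=1$ (after your normalization that $\mu$ has no parts equal to $1$), i.e.\ $\lambda$ is a hook. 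The fix is the one the paper uses: dispose of hooks first by citing Hwang \cite[Theorem 5.13]{chromtwoparthook}, which gives nonnegativity of $c_\lambda(q)$, and hence of $c_\lambda$, for all hooks. With $k\geq 2$ you get $\lfloor n_i/2\rfloor\leq|\mu|-2$, hence $n_i\leq 2|\mu|-3\leq 9$ (resp.\ $\leq 19$), safely inside the verified range. With that one addition your argument is complete.
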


\begin{proof}
We will prove the first statement by induction on $n=|\lambda|$. We have checked by computer that the Shareshian--Wachs conjecture holds for all natural unit interval graphs $G$ with at most $10$ vertices, so we may assume that $n\geq 11$. We may also assume that $k\geq 2$ because Hwang \cite[Theorem 5.13]{chromtwoparthook} showed that $c_\lambda(q)$ is nonnegative for hooks. 

Suppose that $G$ is not connected, with $G=H_1\sqcup H_2$. The coefficient $c_\lambda(q)$ in $X_G(\bm x;q)$ arises from summing products of coefficients $c_\nu(q)$ in $X_{H_1}(\bm x;q)$ and $c_\rho(q)$ in $X_{H_2}(\bm x;q)$ over all ways of splitting the parts of $\lambda$ into partitions $\nu$ and $\rho$. By induction, such coefficients $c_\nu(q)$ and $c_\rho(q)$ are nonnegative polynomials, so the coefficient $c_\lambda(q)$ must be as well.

Now suppose that $G$ is connected. By \eqref{eq:uicondition}, $G$ must contain the edges $i(i+1)$ for $1\leq i\leq n-1$. But now the independence number of $G$ satisfies
$$
\alpha(G)\leq\lceil n/2\rceil<n-4\leq n-|\mu|+k=\lambda'_1,
$$
so by the Alpha-Omega Lemma, the coefficient $c_\lambda(q)$ is the zero polynomial in this case.

The argument is similar for the second statement. Guay-Paquet \cite[Page 8]{stanstemreduction} checked by computer that the Stanley--Stembridge conjecture holds for all natural unit interval graphs $G$ with at most $20$ vertices, so we may assume that $n\geq 21$. If $G$ is not connected, the coefficient $c_\lambda$ is nonnegative by induction on $n$, and if $G$ is connected, we have
$$
\alpha(G)\leq\lceil n/2\rceil<n-9\leq n-|\mu|+k=\lambda'_1,
$$
so by Lemma \ref{lem:alphaomega}, the coefficient $c_\lambda=0$. 
\end{proof}

The Alpha-Omega Lemma can also be used to translate results about particular partitions to results about particular natural unit interval graphs. As an example, we give another proof of $e$-positivity for graphs with independence number $2$.

\begin{proposition}
Let $G$ be a natural unit interval graph with $\alpha(G)=2$. Then $X_G(\bm x)$ is $e$-positive.
\end{proposition}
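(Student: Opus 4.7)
The plan is to combine the Alpha-Omega Lemma (Lemma~\ref{lem:alphaomega}) with two nonnegativity results already cited in the introduction. By part (1) of that lemma, the hypothesis $\alpha(G)=2$ forces $\lambda_1' \leq 2$ whenever $c_\lambda \neq 0$, so only partitions with at most two parts can contribute to the $e$-expansion of $X_G(\bm x)$.

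I would then dispose of the two remaining shapes separately. If $\lambda$ has a single part, then $\lambda=(n)$ is a hook and $c_\lambda \geq 0$ by Hwang's theorem \cite[Theorem 5.13]{chromtwoparthook}. If $\lambda$ has exactly two parts, then $c_\lambda \geq 0$ by the theorems of Abreu--Nigro \cite[Corollary 1.10]{chromhesssplitting} and, independently, Rok--Szenes \cite[Theorem 4.1]{chromeschers}, both of which apply to every natural unit interval graph and in particular to this $G$. Together these two cases exhaust the partitions permitted by the Alpha-Omega restriction.

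There is essentially no obstacle to overcome: the content of the argument is the observation that the restriction produced by the Alpha-Omega Lemma matches exactly the regime in which two-row $e$-nonnegativity is already known. The pleasant feature of the proof is that it converts a graph-theoretic hypothesis on the independence number into a constraint on the shapes of the surviving $e_\lambda$, thereby reducing the $\alpha(G)=2$ case of the Stanley--Stembridge conjecture to the solved two-row case without appealing to either the Hessenberg-variety cohomology used by Harada--Precup or the combinatorial machinery of Cho--Huh.
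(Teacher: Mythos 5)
Your argument is correct and takes essentially the same approach as the paper: both use part (1) of the Alpha-Omega Lemma to reduce to partitions with at most two parts and then invoke Abreu--Nigro and Rok--Szenes for the two-row case. The only (immaterial) difference is the one-row case, where you cite Hwang's hook result while the paper cites Stanley's acyclic-orientation theorem; either gives $c_n\geq 0$.
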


\begin{proof}
By Lemma \ref{lem:alphaomega}, the only nonzero coefficients $c_\lambda$ appearing in the $e$-expansion of $X_G(\bm x)$ are when $\la$ has at most two parts. If $\lambda=n$ has a single part, then Stanley showed that $c_n$ is nonnegative \cite[Theorem 3.3]{chromsym}.  If $\lambda$ has exactly two parts, then Abreu--Nigro \cite[Corollary 1.10]{chromhesssplitting} and Rok--Szenes \cite[Theorem 4.1]{chromeschers} independently proved that $c_\lambda$ is nonnegative. 
\end{proof}

In general, this argument shows that proving positivity of coefficients $c_\lambda$ with $\lambda'_1\leq k$ would prove $e$-positivity for all natural unit interval graphs with independence number at most $k$. 

Recall that an \emph{acyclic orientation} of $G=([n],E)$ is an assignment $O$ of directions to each edge of $G$ such that there are no directed cycles. 
Directed edges are also called {\em arcs} and an arc from $i$ to $j$ is denoted $i\to j$.
Vertex $i$ is a \emph{sink} of $O$ if there are no outgoing arcs from $i$.
The {\em ascent set} of $O$ is
$$
\Asc O = \{i\to j \mid i<j\}
$$
with {ascent number}
$$
\asc O =\#\Asc O.
$$

Stanley \cite[Theorem 3.3]{chromsym} proved that for an arbitrary graph $G$, the sum 
\begin{equation}
\label{eq:ao}
a_j:=\sum_{\lambda: \ \lambda'_1=j}c_\lambda
\end{equation}
is the number of acyclic orientations of $G$ with exactly $j$ sinks. Shareshian and Wachs \cite[Theorem 5.3]{chromposquasi} proved that if $G$ is a natural unit interval graph, we have
\begin{equation}
\label{eq:aoq}
a_j(q):=\sum_{\lambda: \ \lambda'_1=j}c_\lambda(q)=\sum_{O: \text{ exactly }j\text{ sinks}}q^{\asc O}.
\end{equation}

Therefore, even if $X_G(\bm x)$ is not $e$-positive, these sums of coefficients must be nonnegative. We can ask whether it is possible for this sum to be $0$ without every individual coefficient being $0$. We use the Alpha-Omega Lemma to show that this cannot be the case.

\begin{proposition}
Let $G=([n],E)$ be a graph and suppose that $c_\lambda\neq 0$ for some partition $\lambda$ with $\lambda'_1=j$. Then the sum $a_j$ in \eqref{eq:ao} is not $0$. Moreover, if $G$ is a natural unit interval graph and $c_\lambda(q)\neq 0$ for some partition $\lambda$ with $\lambda'_1=j$, then the sum $a_j(q)$ in \eqref{eq:aoq} is not the zero polynomial.
\end{proposition}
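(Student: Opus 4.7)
The plan is to derive from $c_\lambda\neq 0$ (respectively $c_\lambda(q)\neq 0$) with $\lambda'_1=j$ two structural inequalities on $G$, and then use these to exhibit an acyclic orientation of $G$ with exactly $j$ sinks. The conclusion will then follow from Stanley's identity \eqref{eq:ao} and the Shareshian--Wachs refinement \eqref{eq:aoq}.

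The first inequality, $\alpha(G)\geq j$, is supplied immediately by the Alpha-Omega Lemma. For the second, let $C_1,\dots,C_m$ denote the connected components of $G$. The chromatic (quasi)symmetric function factors as $X_G=\prod_i X_{C_i}$; in the $q$-weighted case this uses that $\asc\kappa$ splits additively as $\sum_i\asc(\kappa|_{C_i})$, and for natural unit interval graphs the components are themselves natural unit interval graphs (given by intervals in $[n]$), so each $X_{C_i}(\bm x;q)$ is symmetric. Expanding each factor in the elementary basis and multiplying out gives
$$c_\lambda=\sum_{\nu^{(1)}\cup\cdots\cup\nu^{(m)}=\lambda}\prod_{i=1}^m c^{(i)}_{\nu^{(i)}}$$
summed over decompositions of $\lambda$ into partitions $\nu^{(i)}\vdash|C_i|$, and analogously for $c_\lambda(q)$. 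Since each $|C_i|\geq 1$, each $\nu^{(i)}$ must have at least one part, so any nonvanishing summand forces $\ell(\lambda)=\sum_i\ell(\nu^{(i)})\geq m$; that is, $j\geq m$.

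With both bounds in hand, $m\leq j\leq\sum_i\alpha(C_i)$, so I can choose integers $k_i$ with $1\leq k_i\leq\alpha(C_i)$ and $\sum_i k_i=j$, pick independent sets $I_i\subseteq C_i$ of size $k_i$, and set $I=\bigsqcup_i I_i$: an independent set of $G$ of size $j$ meeting every component. I will then construct an acyclic orientation whose sink set is exactly $I$. For each vertex $v$, let $d(v)$ be its distance in $G$ to $I$, which is finite because $I$ meets every component. Orient each edge from the endpoint of larger $d$-value to the one of smaller $d$-value, and within a level $\{v:d(v)=k\}$ fix any linear order and orient edges accordingly. Any directed cycle would have to remain within a single level (the $d$-values weakly decrease along arcs, then return), where the orientation is acyclic by construction. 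Each $v\in I$ has $d(v)=0$ and, since $I$ is independent, only distance-$1$ neighbors, so all incident edges point inward and $v$ is a sink; each $v\notin I$ has some neighbor at distance $d(v)-1$ and hence an outgoing edge. Thus the sink set is precisely $I$, giving an acyclic orientation with exactly $j$ sinks. By \eqref{eq:ao} this forces $a_j\geq 1$, and by \eqref{eq:aoq} the corresponding monomial $q^{\asc O}$ appears with coefficient $1$ in $a_j(q)$, making it a nonzero polynomial.

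The main obstacle is the lower bound $j\geq m$. The example $G=2K_2$ shows that $\alpha(G)\geq j$ alone does not suffice: there $\alpha(G)=2$ and $j=1$ is compatible with the Alpha-Omega Lemma, yet $a_1=0$. It is exactly the multiplicativity of $X_G$ over components that prevents the hypothesis $c_\lambda\neq 0$ from placing us in such a situation, and once $m\leq j\leq\alpha(G)$ is established the acyclic-orientation construction proceeds smoothly.
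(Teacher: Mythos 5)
Your proposal is correct and takes essentially the same route as the paper: the paper likewise combines the Alpha-Omega Lemma with the splitting of $c_\lambda$ (or $c_\lambda(q)$) over connected components, and then constructs an acyclic orientation whose sink set is exactly a chosen independent set meeting every component, concluding via \eqref{eq:ao} and \eqref{eq:aoq}. The only differences are cosmetic: the paper applies the Alpha-Omega Lemma component-wise to obtain per-component sink counts $\nu(r)'_1$ directly (so it never needs your bound $j\geq m$ or the free distribution of the $k_i$), and it orients edges via a greedy labeling grown outward from the independent set rather than your BFS-distance levels.
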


\begin{proof}
We will construct an acyclic orientation $O$ of $G$ with exactly $j$ sinks, which shows that $a_j$ is not $0$, and if $G$ is a natural unit interval graph, that $a_j(q)$ is not the zero polynomial.

First suppose that $G$ is connected. By Lemma \ref{lem:alphaomega} we have $\alpha(G)\geq j$, so there is an independent set $I$ in $G$ with $|I|=j$. 
We label the vertices of $G$ 
with $\bf 1$ through $\bf n$ as follows, where boldface numbers are being used to distinguish these labels from the ones given by the natural unit interval labeling. Label the vertices of $I$ with the numbers $\bf 1$ through $\bf j$ in any way, then successively select an unlabelled vertex adjacent to a labelled vertex and assign it the smallest unused label. Note that because $G$ is connected, every vertex receives a label. We now define $O$ by directing each edge from the larger label to the smaller. By construction, $O$ is an acyclic orientation.
The elements of $I$ are independent and have the smallest labels, 
so they are sinks of $O$. The vertices not in $I$ are adjacent to a previously-labelled vertex and so have an outgoing edge in $O$.  Thus they are not sinks.

Now suppose that $G$ has connected components $H_1,\ldots,H_k$. In order to have $c_\lambda(q)\neq 0$, there must be a way to split the parts of $\lambda$ into partitions $\nu(1),\ldots,\nu(k)$ where for every $1\leq r\leq k$, the coefficient $c_{\nu(r)}(q)$ of $e_{\nu(r)}$ in $X_{H_r}(\bm x;q)$ is nonzero. By the Alpha-Omega Lemma we have $\alpha(H_r)\geq\nu(r)'_1$, so there are independent sets $I_r$ in $H_r$ with $|I_r|=\nu(r)'_1$. But now the above argument shows that each $H_r$ has an acyclic orientation $O_r$ with exactly $\nu(r)'_1$ sinks, and therefore $G$ has an acyclic orientation $O$ with exactly $\nu(1)'_1+\cdots+\nu(k)'_1=\lambda'_1=j$ sinks.
\end{proof}

We now prove a stronger version of the Alpha-Omega Lemma
in the spirit of Greene's generalization~\cite{gre:est} of Schensted's theorem~\cite{sch:lid} about longest increasing and decreasing subsequences of a permutation. Let $\rho=\{B_1,\ldots,B_k\}$ be a collection of disjoint subsets of vertices in a graph $G=([n],E)$. We say that $\rho$ is an \emph{independent partition} if each $B_i$ is an independent set in $G$ and we say that $\rho$ is a \emph{clique partition} if each $B_i$ is a clique in $G$. We now define
\begin{align*}
\alpha_k(G)&=\text{the maximum number of vertices in an independent partition }\rho=\{B_1,\ldots,B_k\}\\
\omega_k(G)&=\text{the maximum number of vertices in a clique partition }\rho=\{B_1,\ldots,B_k\}.
\end{align*}

\begin{lemma}\label{lem:kalphaomega}
Let $G$ be a graph and let $X_G=\sum_\lambda c_\lambda e_\lambda$ be its chromatic symmetric function. If $c_\lambda\neq 0$, then for all $k$ we must have
\begin{enumerate}
\item $\alpha_k(G)\geq\lambda'_1+\cdots+\lambda'_k$, and 
\item $\omega_k(G)\leq\lambda_1+\cdots+\lambda_k$.
\end{enumerate}

Moreover, suppose that $G$ is a graph for which the chromatic quasisymmetric function is in fact symmetric, and let $X_G(\bm x;q)=\sum_\lambda c_\lambda(q)e_\lambda$. If $c_\lambda(q)\neq 0$, then again both inequalities hold.
\end{lemma}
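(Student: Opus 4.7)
The plan is to mimic the proof of Lemma~\ref{lem:alphaomega}, but extract two stronger bounds at the monomial stage and then propagate them through the same $m$-to-$e$ dominance triangularity. First I would expand
$$
X_G(\bm x) = \sum_\mu a_\mu m_\mu \qquad \text{(resp. } X_G(\bm x;q) = \sum_\mu a_\mu(q) m_\mu\text{)}
$$
and fix a $\mu$ with $a_\mu\neq 0$ (or $a_\mu(q)\neq 0$), so that $G$ admits a proper colouring $\kappa$ with exactly $\mu_i$ vertices of colour $i$. Let $V_c=\kappa^{-1}(c)$ be the colour classes.

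The next step is the bound $\alpha_k(G)\geq \mu_1+\cdots+\mu_k$. This is immediate: the $k$ largest colour classes $V_1,\ldots,V_k$ are pairwise disjoint independent sets whose union has $\mu_1+\cdots+\mu_k$ vertices, hence form an independent partition of the required size. The companion inequality $\omega_k(G)\leq\mu'_1+\cdots+\mu'_k$ requires slightly more care. Given any clique partition $\{B_1,\ldots,B_k\}$, each colour class $V_c$ meets each clique $B_j$ in at most one vertex, so
$$
|V_c \cap (B_1\cup\cdots\cup B_k)| \leq \min(k,\mu_c).
$$
Summing over $c$ and using the standard identity $\sum_c \min(k,\mu_c)=\sum_{j=1}^k \#\{c:\mu_c\geq j\}=\mu'_1+\cdots+\mu'_k$ together with disjointness of the $B_j$ yields $|B_1|+\cdots+|B_k|\leq\mu'_1+\cdots+\mu'_k$, which is the desired bound. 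I expect the double-counting identity in this step to be the only slightly subtle point, and it is the main place where the proof goes beyond the $k=1$ case of Lemma~\ref{lem:alphaomega}.

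Finally, to translate the bounds on $\mu$ into bounds on $\lambda$, I invoke the upper-triangularity of the change of basis $m_\mu=\sum_\lambda A_{\lambda,\mu}e_\lambda$ with respect to dominance: if $c_\lambda\neq 0$ (or $c_\lambda(q)\neq 0$), then some $\mu$ with $a_\mu\neq 0$ satisfies $\mu'\unlhd\lambda$. Equivalently, conjugation reverses dominance, so $\mu\unrhd\lambda'$. Taking partial sums in each dominance relation then gives
$$
\alpha_k(G)\geq \mu_1+\cdots+\mu_k \geq \lambda'_1+\cdots+\lambda'_k, \qquad \omega_k(G)\leq \mu'_1+\cdots+\mu'_k\leq \lambda_1+\cdots+\lambda_k,
$$
which are exactly (1) and (2). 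The quasisymmetric case is handled identically, since whenever $X_G(\bm x;q)$ is symmetric, $a_\mu(q)\neq 0$ still forces the existence of a proper colouring of the required colour-multiplicity type.
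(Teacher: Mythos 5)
Your proposal is correct and follows essentially the same route as the paper: expand in the monomial basis, bound $\alpha_k$ by the $k$ largest colour classes and $\omega_k$ by the double count $\sum_c\min(k,\mu_c)=\mu'_1+\cdots+\mu'_k$, then push the bounds through the dominance triangularity of the $m$-to-$e$ change of basis. The only difference is cosmetic (you phrase the clique bound via intersections $V_c\cap B_j$ rather than counting colour repetitions within the clique partition), so there is nothing to add.
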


\begin{proof}
As in the proof of Lemma \ref{lem:alphaomega}, we first consider the expansion $X_G(\bm x)=\sum_\mu a_\mu m_\mu$ in the basis of monomial symmetric functions. If $a_\mu\neq 0$, then there exists a proper colouring $\kappa$ of $G$ with $\mu_1$ $1$'s, $\mu_2$ $2$'s, and so on. Letting $B_i$ be the set of vertices coloured $i$, then $\rho=\{B_1,\ldots,B_k\}$ is precisely an independent partition of size $\mu_1+\cdots+\mu_k$, which means that 
$$
\alpha_k(G)\geq\mu_1+\cdots+\mu_k.
$$
On the other hand, let $\rho=\{C_1,\ldots,C_k\}$ be a clique partition of $G$ with $\omega_k(G)$ vertices. Because every colour is used at most once in each clique and therefore at most $k$ times in the vertices of $\rho$, we have
$$
\omega_k(G)=|C_1|+\cdots+|C_k|\leq \sum_{i-1}^n \min\{\mu_i,k\}=\mu'_1+\cdots+\mu'_k.
$$

Now if $c_\lambda\neq 0$, then there must be some $\mu$ with $\mu'\unlhd\lambda$ and $a_\mu\neq 0$, and so

$$
\alpha_k(G)\geq\mu_1+\cdots+\mu_k\geq\lambda'_1+\cdots+\lambda'_k
$$
and 
$$
\omega_k(G)\leq\mu'_1+\cdots+\mu'_k\leq\lambda_1+\cdots+\lambda_k.
$$

If $X_G(\bm x;q)$ is symmetric with $X_G(\bm x;q)=\sum_\mu a_\mu(q)m_\mu$ and $a_\mu(q)\neq 0$, then again there exists such a proper colouring, so again the inequalities hold.
\end{proof}

We can use Lemma \ref{lem:kalphaomega} to prove analogues of results obtained using the Alpha-Omega Lemma for other partitions $\la$. Recall that a \emph{cut vertex} of a connected graph $G$ is a vertex whose removal would disconnect $G$.

\begin{corollary}
Let $G=([n],E)$ be a connected natural unit interval graph with no cut vertex and let $\lambda$ be a partition of the form 
$\la=(\mu_1,\ldots,\mu_k,2,\ldots,2,1,\ldots,1)$ for some partition $\mu=(\mu_1,\ldots,\mu_k)$.
If $|\mu|\leq 4$, then the coefficient $c_\lambda(q)$ is nonnegative. If $|\mu|\leq 8$, then the coefficient $c_\lambda$ is nonnegative.
\end{corollary}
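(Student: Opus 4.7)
My plan is to extend the strategy of Corollary \ref{TopCor} by invoking Lemma \ref{lem:kalphaomega} with $k=2$ together with the hypothesis that $G$ has no cut vertex. The first structural observation is that if $G$ is a connected natural unit interval graph with no cut vertex, then $\{i,i+1,i+2\}$ is a triangle of $G$ for every $1\leq i\leq n-2$. Indeed, if $\{i,i+2\}\notin E$, then repeated application of the natural unit interval condition \eqref{eq:uicondition} shows that no edge $jk$ can have $j\leq i$ and $k\geq i+2$. Hence every path from $\{1,\ldots,i\}$ to $\{i+2,\ldots,n\}$ must pass through $i+1$, making $i+1$ a cut vertex and giving a contradiction.

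This triangle structure yields a bound on $\al_2(G)$. If $I_1,I_2$ are disjoint independent sets in $G$ and if $\{i,i+1,i+2\}\subseteq I_1\cup I_2$, then by pigeonhole two of these three vertices lie in the same $I_j$, contradicting independence. Thus $I_1\cup I_2$ contains no three consecutive integers in $[n]$, and partitioning $[n]$ into consecutive blocks of three shows that any such set has size at most $\lceil 2n/3\rceil$, so $\al_2(G)\leq\lceil 2n/3\rceil$.

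For $\la$ of the given form with $a$ twos and $b$ ones in the tail, I would compute $\la'_1+\la'_2=2k+2a+b=n-(|\mu|-2k)$. Choosing the decomposition in which $\mu$ consists exactly of the parts of $\la$ that are at least $3$, this quantity equals $n-\sum_{\la_i\geq 3}(\la_i-2)$, which is at least $n-2$ when $|\mu|\leq 4$ (at most one part of $\la$ exceeds $2$, and that part is at most $4$) and at least $n-6$ when $|\mu|\leq 8$ (at most two such parts, with total sum at most $8$).

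Applying Lemma \ref{lem:kalphaomega} with $k=2$, if the coefficient were nonzero then $\al_2(G)\geq\la'_1+\la'_2$, and combining with $\al_2(G)\leq\lceil 2n/3\rceil$ gives a contradiction whenever $\lceil 2n/3\rceil<n-2$, that is, $n\geq 9$ for the first statement, and whenever $\lceil 2n/3\rceil<n-6$, that is, $n\geq 21$ for the second statement. For the remaining values of $n$ I would invoke the computer verifications of the Shareshian--Wachs conjecture for $n\leq 10$ and of the Stanley--Stembridge conjecture for $n\leq 20$ cited in the proof of Corollary \ref{TopCor}. The main technical step is the cut vertex analysis, since the triangle structure it produces is exactly what underlies the $\al_2(G)\leq\lceil 2n/3\rceil$ bound.
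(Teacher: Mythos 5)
Your proposal is correct and follows essentially the same route as the paper: the edges $(i-1)(i+1)$ forced by the absence of a cut vertex, the bound $\alpha_2(G)\leq\lceil 2n/3\rceil$, the computation $\lambda'_1+\lambda'_2\geq n-2$ (resp.\ $n-6$), Lemma~\ref{lem:kalphaomega} with $k=2$, and the computer verifications from Corollary~\ref{TopCor} for small $n$. The only differences are cosmetic: you spell out the cut-vertex argument that the paper merely asserts, and your thresholds ($n\geq 9$ and $n\geq 21$) are slightly sharper than the paper's $n\geq 11$ and $n\geq 21$, which in any case lie within the computer-checked ranges.
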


\begin{proof}
We first consider $c_\la(q)$ where we can assume, as in the proof of Corollary~\ref{TopCor}, that $n\ge11$.
By \eqref{eq:uicondition}, because $G$ is connected, $G$ must contain the edges $i(i+1)$ for $1\leq i\leq n-1$, and because $G$ has no cut vertex, $G$ must also contain the edges $(i-1)(i+1)$ for $2\leq i\leq n-1$. Now the largest possible independent partition of $G$ with two blocks is $\rho=\{\{1,4,7,\ldots\},\{2,5,8,\ldots\}\}$. So, using the bound on $n$,
$$
\alpha_2(G)\leq\lceil 2n/3 \rceil< n-2 \le n-|\mu|+2k = \lambda'_1+\lambda'_2,
$$
Thus, by Lemma \ref{lem:kalphaomega}, for such $\la$ we have  $c_\lambda(q)=0$.

Similarly, for $c_\la$ we need only check $n\ge 21$ as in Corollary~\ref{TopCor}'s demonstration.
The previously displayed inequalities hold with $n-2$ replaced by $n-6$.  So in this case, as before, $c_\la=0$ completing the proof.
\end{proof}

For each  natural unit interval graph $G=([n],E)$ we will now identify the unique dominance-minimal partition $\la_G$ for which 
$c_{\la_G}\neq 0$, which provides a sort of converse to the Alpha-Omega Lemma. 
Define a proper colouring 
$\ka_G$ of $G$ inductively as follows. Let $\ka_G(1)=1$ and if the values 
$\ka_G(1),\ldots,\ka_G(j-1)$ are defined, then let
\begin{equation}
\label{kappa}
\ka_G(j)=\min([n]\setminus\{\ka_G(i) \mid i<j \text{ and } ij\in E\}).
\end{equation}
In other words, we colour the vertices one at a time, always using the smallest available colour. Note that,
by~\eqref{eq:uicondition},
if $\ka_G(j)=k$ then the vertex $j$ has at least $(k-1)$ smaller neighbours and so, by \eqref{eq:uicondition}, belongs to a clique of size $k$. Therefore, the colouring $\kappa$ uses exactly $\omega(G)$ different colours. See Figure~\ref{fig:lexlargestcolouring} for an example.

Define the {\em type} of a coloring $\ka$ to be the weak composition 
$\al=(\al_1,\ldots,\al_m)$ where $m$ is the maximum value of a color and
$$
\al_i =\text{ the number of vertices colored $i$ by $\ka$}
$$
for $i\in[m]$.  We will use the notation $\mu_G$ for the type of $\ka_G$.  As the notation suggests, we will see in the next proposition that $\mu_G$ is indeed a partition.

\begin{figure}
\begin{equation*}
\begin{tikzpicture}
\draw (1,0) node (){$G=$};
\filldraw (2,0) circle (3pt) node[align=center,below] (1){1};
\filldraw (3,0) circle (3pt) node[align=center,below] (2){2};
\filldraw (4,0) circle (3pt) node[align=center,below] (3){3};
\filldraw (5,0) circle (3pt) node[align=center,below] (4){4};
\filldraw (6,0) circle (3pt) node[align=center,below] (5){5};
\draw (2,0)--(6,0);
\draw (2,0) -- (4,0) arc(0:180:1) --cycle;
\draw (4,0) -- (6,0) arc(0:180:1) --cycle;
\draw (1,-1) node () {$\kappa=$};
\draw (2,-1) node (){1};
\draw (3,-1) node (){2};
\draw (4,-1) node (){3};
\draw (5,-1) node (){1};
\draw (6,-1) node (){2};
\end{tikzpicture}
\end{equation*}
\caption{\label{fig:lexlargestcolouring} The bowtie graph $G$ and the proper colouring $\ka_G$}
\end{figure}
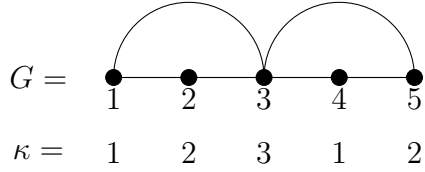

\begin{proposition}\label{prop:domsmallest}
Let $G=([n],E)$ be a natural unit interval graph. 
Then $\mu_G$ is a partition and $c_{\la_G},c_{\la_G}(q)\neq0$ where $\la_G=\mu_G'$.
Furthermore, if $c_\la\neq0$ then $\la\dom\la_G$.
Similarly, $c_\la(q)\neq0$ implies $\la\dom\la_G$.
\end{proposition}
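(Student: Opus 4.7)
I would proceed in three stages: (1) verify $\mu_G$ is a partition, (2) prove a Key Lemma that $\mu_G$ dominates the partition type of every proper colouring of $G$, and (3) deduce the non-vanishing of $c_{\la_G}(q)$ and $c_{\la_G}$ together with the dominance claim via the monomial-to-elementary change of basis.

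For step~(1), I construct an injection $\phi\colon\{v:\ka_G(v)=c+1\}\to\{v:\ka_G(v)=c\}$ for every $c\ge1$, giving $\mu_{G,c+1}\le\mu_{G,c}$. By \eqref{kappa}, any $v$ with $\ka_G(v)=c+1$ has some smaller neighbour of colour $c$; since the smaller neighbours of $v$ form a clique by \eqref{eq:uicondition}, this neighbour is unique, and I call it $\phi(v)$. For injectivity, if $\phi(v_1)=\phi(v_2)=u$ with $v_1<v_2$, then $u<v_1<v_2$ and $uv_2\in E$ force $v_1v_2\in E$ via \eqref{eq:uicondition}, contradicting the fact that $\ka_G(v_1)=\ka_G(v_2)$.

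For step~(2), the Key Lemma states $\mu_G\dom\mu$ whenever $a_\mu(q)\not\equiv 0$. Since Lemma~\ref{lem:kalphaomega} provides $\mu_1+\cdots+\mu_k\le\alpha_k(G)$ for every $k$, it suffices to show the matching lower bound $\mu_{G,1}+\cdots+\mu_{G,k}\ge\alpha_k(G)$, i.e., that $V_k:=\{v:\ka_G(v)\le k\}$ satisfies $|V_k|=\alpha_k(G)$. I proceed by induction on $|V(G)|$. The disconnected case is immediate because both $\mu_G$ and $\alpha_k$ are componentwise additive across components. For connected $G$, a standard greedy-exchange argument using \eqref{eq:uicondition} shows that $V_1=\{v_1<v_2<\cdots\}$, where $v_{i+1}$ is the smallest vertex greater than $v_i$ not adjacent to $v_i$, is the \emph{leftmost} maximum independent set, so $|V_1|=\alpha(G)$. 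Next I verify that $G':=G\setminus V_1$ is a natural unit interval graph and that $\ka_{G'}$ equals $\ka_G|_{G'}-1$, so $|V_k|=|V_1|+|V_{k-1}(G')|=\alpha(G)+\alpha_{k-1}(G')$ by induction on $G'$. The main obstacle is the identity $\alpha_k(G)=\alpha(G)+\alpha_{k-1}(G')$. The easy direction is immediate by combining $V_1$ with an optimal $(k-1)$-independent partition of $G'$. For the reverse, given any optimal $k$-independent partition $\{I_1,\ldots,I_k\}$, I would modify it without losing coverage so that one of its parts equals $V_1$, using the leftmost property to exchange vertices of $I_1$ outside $V_1$ for the corresponding earlier vertices of $V_1$; the clique-interval structure of $G$ (each smaller neighbourhood $[a_v,v-1]$ is a clique) ensures that such exchanges preserve independence, and then the remaining parts restrict to a $(k-1)$-independent partition of $G'$ of total size at most $\alpha_{k-1}(G')$.

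For step~(3), the monomial-to-elementary transition satisfies $m_\mu=\sum_{\la\unrhd\mu'}A_{\la,\mu}\,e_\la$ with leading coefficient $A_{\mu',\mu}\neq0$, so $c_\la(q)=\sum_\mu A_{\la,\mu}\,a_\mu(q)$. The Key Lemma forces any $\mu$ with $a_\mu(q)\not\equiv 0$ to satisfy $\mu\unlhd\mu_G$. For $\la=\la_G=\mu_G'$, the condition $A_{\la_G,\mu}\neq0$ becomes (after conjugation) $\mu\unrhd\mu_G$, so only $\mu=\mu_G$ contributes, giving $c_{\la_G}(q)=A_{\la_G,\mu_G}\,a_{\mu_G}(q)\neq0$, because $a_{\mu_G}(q)$ contains the nonzero term $q^{\asc\ka_G}$ from the greedy colouring $\ka_G$; the same reasoning at $q=1$ gives $c_{\la_G}\neq0$. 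For the dominance statement, if $c_\la(q)\neq0$ then some $\mu$ with $A_{\la,\mu}\neq0$ and $a_\mu(q)\neq0$ exists, so $\mu'\unlhd\la$ and $\mu\unlhd\mu_G$; conjugating the latter yields $\la_G=\mu_G'\unlhd\mu'\unlhd\la$, i.e., $\la\dom\la_G$, and the case $c_\la\neq0$ is handled identically at $q=1$.
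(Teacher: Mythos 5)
Your steps (1) and (3) are correct and essentially coincide with the paper's own proof: the injection from the colour-$(c+1)$ class into the colour-$c$ class, with injectivity via \eqref{eq:uicondition}, is exactly the paper's matching argument, and the endgame via the dominance-triangular transition $m_\mu=\sum_{\la\,\unrhd\,\mu'}A_{\la,\mu}e_\la$ with unit diagonal, isolating $\mu=\mu_G$ as the sole contributor to $c_{\la_G}(q)$, is exactly how the paper concludes. The divergence is in step (2). The paper proves the key dominance \eqref{eq:domtype} directly by induction on $n$: writing $k=\ka_G(n)$, for $j\ge k$ it restricts both colourings to $\{1,\ldots,n-1\}$ and compares partial sums, and for $j<k$ it removes the maximal clique $C=\{m,\ldots,n\}$, using that $\ka_G$ uses each of the colours $1,\ldots,k$ exactly once on $C$ while an arbitrary proper colouring uses each colour at most once there. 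You instead aim for the stronger Greene-type equality $\mu_{G,1}+\cdots+\mu_{G,k}=\alpha_k(G)$ and reduce by peeling off the greedy colour class $V_1$.

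The gap is that your reduction hinges on the inequality $\alpha_k(G)\le\alpha(G)+\alpha_{k-1}(G\setminus V_1)$, and the exchange argument you offer for it is a sketch, not a proof. The statement is true --- indeed it follows from the proposition itself, so beware of circularity if you try to import it from the literature on Greene--Kleitman-type theorems --- but ``exchange vertices of $I_1$ outside $V_1$ for the corresponding earlier vertices of $V_1$'' is undefined when the vertices of $V_1$ are scattered across several blocks $I_2,\ldots,I_k$ of the optimal partition: forcing one block to equal $V_1$ requires deleting $V_1$-vertices from the other blocks and rehousing the displaced vertices of $I_1\setminus V_1$, and you give no argument that rehoused vertices remain independent in their new blocks or that total coverage does not drop. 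The leftmost property gives the elementwise bound $v_i\le u_i$ against a \emph{single} independent set, which is enough for $|V_1|=\alpha(G)$ but does not by itself control these cross-block collisions; this multi-block surgery is the entire difficulty of your route, whereas the paper's induction never touches an optimal independent partition and so avoids it. Everything else in your step (2) checks out: $V_1$ is a maximum independent set by the standard exchange, induced subgraphs with the inherited order are natural unit interval graphs, $\ka_{G\setminus V_1}=\ka_G|_{G\setminus V_1}-1$ holds (the mex computation shifts by one since no smaller neighbour of $v$ carries colour $\ka_G(v)$), and $\alpha_k$ and the partial sums of $\mu_G$ are both additive over components.
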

\begin{proof}
To show that  $\mu_G$ is a partition it suffices to show, for all $k\ge 1$, that there is a matching $M$ from the vertices with color $k+1$ into the vertices with color $k$.  Suppose $\ka_G(j)=k+1$.  Then by the definition of $\ka_G$, vertex $j$ must be adjacent to vertices previously colored $1,\ldots,k$.  Since $G$ is a natural unit interval graph, the set of smaller neighbours of $j$ is a clique and so there is a unique vertex $i$ adjacent to $j$ with $i<j$ and $\ka_G(i)=k$.  Put $ij$ into  $M$ and do this for all the vertices colored $k+1$.  To show that this is a matching, we must prove that we can't have $\ka(j)=\ka(j')=k+1$ with $ij,ij'\in E$.  Without loss of generality $j<j'$.
So $i<j<j'$ which together with $ij'\in E$ implies $jj'\in E$.  But we can not have two vertices of the same color adjacent in $G$, the desired contradiction.

We now use induction on $n$ to prove that if $G$ has a proper colouring $\kappa$ of type $\alpha$, then
\begin{equation}\label{eq:domtype}(\mu_G)_1+\cdots+(\mu_G)_j\geq\alpha_1+\cdots+\alpha_j\end{equation}
for every $j$. This is clear if $n=1$, so suppose that $n\geq 2$. Let $\kappa_G(n)=k$. We first prove that \eqref{eq:domtype} holds for every $j\geq k$. Let $\tilde\mu$ and $\tilde\alpha$ be the types of the colourings $\kappa_G$ and $\kappa$ restricted to the vertices $\{1,\ldots,n-1\}$. Then by our induction hypothesis, we have for every $j\geq k$ that
$$(\mu_G)_1+\cdots+(\mu_G)_j=\tilde\mu_1+\cdots+\tilde\mu_j+1\geq \tilde\alpha_1+\cdots+\tilde\alpha_j+1\geq\alpha_1+\cdots+\alpha_j.$$
We now prove that \eqref{eq:domtype} holds for every $j<k$. Let $m$ be minimal such that the vertices $C=\{m,\ldots,n\}$ form a clique in $G$. Let $\tilde\mu$ and $\tilde\alpha$ be the types of the colourings $\kappa_G$ and $\kappa$ restricted to the vertices $\{1,\ldots,m-1\}$. By construction, $\kappa_G$ uses the colours $1,\ldots,k$ exactly once each in $C$, 
so $(\mu_G)_i=\tilde\mu_i+1$ for every $1\leq i\leq k$. Because $C$ is a clique, $\kappa$ uses every colour at most once each in $C$, so $\alpha_i\leq\tilde\alpha_i+1$ for every $i$. Therefore, by our induction hypothesis, we have for every $j<k$ that
$$(\mu_G)_1+\cdots+(\mu_G)_j=\tilde\mu_1+\cdots+\tilde\mu_j+j\geq \tilde\alpha_1+\cdots+\tilde\alpha_j+j\geq\alpha_1+\cdots+\alpha_j.$$
Therefore, \eqref{eq:domtype} holds for every $j$. In particular, if the type of $\kappa$ is a partition $\nu$, then 
$\mu_G\unrhd\nu$.

Now the change of basis from monomial to elementary symmetric functions has the form 
$$m_\nu=\sum_\rho A_{\nu,\rho}e_\rho,$$
where $A_{\nu,\rho}=0$ unless $\nu'\unlhd\rho$, and $A_{\nu,\nu}=1$. Therefore, $c_{\la_G}\neq 0$ and $c_{\la_G}(q)\neq 0$.  And if $\rho$ is a partition for which $c_\rho\neq 0$ or $c_\rho(q)\neq 0$, then there must be a partition $\nu$ for which $a_\nu\neq 0$ and $\nu'\unlhd\rho$, so 
$$\rho\unrhd\nu'\unrhd\mu_G'=\lambda_G
$$
which completes the proof.
\end{proof}

\begin{example}
The colouring $\kappa_G$  for the bowtie graph is shown in Figure \ref{fig:lexlargestcolouring}. It has type $\la_G=(2,2,1)$.  So Proposition \ref{prop:domsmallest} implies that the unique dominance-minimal partition for which $c_\lambda\neq 0$ is $\lambda=\mu'=32$. This agrees with the expression for $X_G(\bm x)$ given in Figure \ref{fig:alphaomegaexample}.
\end{example}

We conclude this section by using the monomial change of basis  to prove a divisibility result.
The {\em closed neighborhood} of a vertex $v$ of $G$ is the set consisting of $v$ and all its adjacent vertices.
Let us say that vertices $u$ and $v$ of a graph $G=([n],E)$ are \emph{equivalent} if they have the same closed neighbourhood. Note that equivalent vertices must be adjacent. We now prove a divisibility result about the coefficients $c_\lambda$ and $c_\lambda(q)$. For an integer $k$ we define the standard $q$-analogues
$$
[k]_q=1+q+q^2+\cdots+q^{k-1}=\frac{q^k-1}{q-1}
$$
and 
$$
[k]_q!=[k]_q[k-1]_q\cdots[2]_q[1]_q.
$$

\begin{proposition}
Let $G=([n],E)$ and let $C_1,\ldots,C_k$ be the equivalence classes of the vertex set $[n]$. Then every coefficient $c_\lambda$ is divisible by $|C_1|!\cdots|C_k|!$. If $G$ is a graph for which $X_G(\bm x;q)$ is symmetric, then every coefficient $c_\lambda(q)$ is divisible by $[|C_1|]_q!\cdots[|C_k|]_q!$.
\end{proposition}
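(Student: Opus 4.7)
The plan is to first prove the divisibility at the level of the monomial basis and then transfer it to the elementary basis. Write the expansions
$$
X_G(\bx) = \sum_\mu a_\mu m_\mu \quad \text{and, when $X_G(\bx;q)$ is symmetric,} \quad X_G(\bx;q) = \sum_\mu a_\mu(q) m_\mu.
$$
Since the transition matrix from monomial to elementary symmetric functions has integer entries, it suffices to prove that $|C_1|! \cdots |C_k|!$ divides each $a_\mu$ and that $[|C_1|]_q! \cdots [|C_k|]_q!$ divides each $a_\mu(q)$.

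For the non-$q$ statement I let $\Gamma = \Sym(C_1) \times \cdots \times \Sym(C_k)$ act on the set of proper colourings of $G$ of type $\mu$: each $\sigma \in \Sym(C_i)$ sends $\kappa$ to $\kappa \circ \sigma^{-1}$ on $C_i$ and leaves $\kappa$ unchanged outside $C_i$. This is well defined because equivalent vertices share the same closed neighbourhood, so permuting colours within $C_i$ cannot violate properness on edges leaving $C_i$; the type $\mu$ is manifestly preserved. The action is free because each $C_i$ is a clique (equivalent vertices are adjacent) and hence receives pairwise distinct colours under $\kappa$, forcing any stabilising $\sigma$ to be trivial. Consequently the colourings of type $\mu$ split into $\Gamma$-orbits of size $|C_1|!\cdots|C_k|!$, which proves the non-$q$ divisibility.

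To obtain the $q$-analogue, I track how $\asc \kappa$ varies within a single $\Gamma$-orbit. Focus on one class $C_i = \{v_1 < \cdots < v_m\}$, freeze the colouring on $V \setminus C_i$, and freeze the set $S = \{s_1 < \cdots < s_m\}$ of colours used on $C_i$. The resulting sub-orbit consists of the $m!$ bijections $\phi\colon C_i \to S$, parametrised by $\pi \in S_m$ via $\phi(v_a) = s_{\pi(a)}$. Decompose $\asc \kappa$ into three pieces: ascents on edges inside $C_i$, on edges between $C_i$ and its complement, and on edges disjoint from $C_i$. The third piece is constant in $\pi$; because $C_i$ is a clique, the first piece equals $\binom{m}{2} - \inv(\pi)$. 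The crucial step is to show that the second piece is also independent of $\pi$. This holds provided every $w \notin C_i$ satisfies $w < v_1$ or $w > v_m$, in which case the $m$ edges $v_a w$ all point the same way relative to $\kappa(w)$, so their joint contribution depends only on $\kappa(w)$ and the multiset $S$. Granting this, the sub-orbit sum is
\begin{equation*}
q^{\mathrm{const}} \sum_{\pi \in S_m} q^{\binom{m}{2} - \inv(\pi)} = q^{\mathrm{const}} [m]_q!,
\end{equation*}
where the last equality uses the bijection of $S_m$ sending $\pi$ to the reverse of its one-line notation, which turns $\inv(\pi)$ into $\binom{m}{2} - \inv(\pi)$. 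Multiplying across the independent factors of $\Gamma$, each orbit contributes a multiple of $\prod_i [|C_i|]_q!$ to $a_\mu(q)$, as required.

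The main obstacle is the ``interval property'' of equivalence classes invoked above. For a natural unit interval graph this is a direct consequence of \eqref{eq:uicondition}: if $u \sim v$ with $u < v$ and $u < w < v$, then \eqref{eq:uicondition} gives $uw, wv \in E$, and a short case analysis over any other vertex $x$, combined with the equality $N[u] = N[v]$, yields $x \in N[w] \Leftrightarrow x \in N[u]$, so $w \sim u$; hence every equivalence class is an interval of consecutive integers in $[n]$. This covers the graphs of Conjecture~\ref{SWconj} and, I expect, every graph for which $X_G(\bx;q)$ is symmetric, so the argument above then delivers the full divisibility as stated.
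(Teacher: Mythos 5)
Your approach is the same as the paper's: reduce to the monomial coefficients $a_\mu$ and $a_\mu(q)$ using integrality of the monomial-to-elementary transition matrix, then group the proper colourings of type $\mu$ into free orbits under permuting the colours within each equivalence class (free because each class is a clique). Your $q=1$ argument is complete and matches the paper's; the paper phrases the orbit decomposition via ``similar'' and ``initial'' colourings rather than a group action, but that is cosmetic.

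For the $q$-statement your proof has a gap, which you yourself flag. The orbit computation needs the ascent contribution of edges between $C_i$ and its complement to be invariant under permuting the colours within $C_i$, and this can genuinely fail when some $w\notin C_i$ adjacent to $C_i$ satisfies $\min C_i<w<\max C_i$: with $C_i=\{1,3\}$, $w=2$, and $\kappa(1)<\kappa(2)<\kappa(3)$, swapping the two colours on $C_i$ changes the ascent count on the edges $12$ and $23$ by $2$, so the total ascent number does not change by the internal inversion statistic alone. Your interval-property argument closes this only for natural unit interval graphs, whereas the proposition is asserted for every graph with symmetric $X_G(\bm x;q)$, and ``I expect'' does not cover that larger class. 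That said, you have isolated a real subtlety: the paper's own proof simply asserts that applying $\sigma_i$ increases the ascent number by exactly $\mathrm{des}(\sigma_i)$, which silently assumes the same cross-edge invariance. So your writeup makes explicit a step the paper leaves unjustified, but as written neither argument establishes the $q$-divisibility in the full stated generality; to finish, you would need to show that every graph with symmetric $X_G(\bm x;q)$ satisfies the required betweenness condition on its equivalence classes, or find an argument that averages over the orbit without the pointwise ascent formula.
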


\begin{proof}
We will show that every coefficient $a_\mu$ in the expansion $X_G(\bm x)=\sum_\mu a_\mu m_\mu$ is divisible by $|C_1|!\cdots|C_k|!$, from which the first statement follows because the coefficients in the change of basis from monomial to elementary symmetric functions are all integers. Recall that $a_\mu$ is the number of proper colourings of $G$ of type $\mu$, meaning they use $\mu_1$ $1$'s, $\mu_2$ $2$'s, and so on.

We first note that because each $C_i$ is a clique in $G$, a proper colouring $\kappa$ of $G$ must colour the vertices of $C_i$ with $|C_i|$ distinct colours. Let us say that proper colourings $\kappa$ and $\kappa'$ are \emph{similar} if they use the same colours on every $C_i$, in other words we have
$$
\{\kappa(v): \ v\in C_i\}=\{\kappa'(v): \ v\in C_i\}\text{ for every }1\leq i\leq k.
$$
Let us call a proper colouring $\kappa$ \emph{initial} if it has no ascents on any $C_i$, in other words we have
\begin{equation}\label{eq:initial}
\kappa(u)>\kappa(v)\text{ whenever }u<v\text{ and }u,v\in C_i\text{ for some }1\leq i\leq k.
\end{equation}
Now every proper colouring of $G$ is similar to a unique initial colouring given by permuting the colours in each $C_i$ so that~\eqref{eq:initial} holds. Conversely, given an initial proper colouring $\kappa$, there are exactly $|C_1|!\cdots|C_k|!$ proper colourings $\kappa'$ similar to it, given by permuting the colours in each $C_i$ in all possible ways. Therefore, the coefficient $a_\mu$ is equal to $|C_1|!\cdots|C_k|!$ multiplied by the number of initial proper colourings of type $\mu$.

For the second statement, we note that when we obtain a proper colouring $\kappa'$ from an initial proper colouring $\kappa$ by applying a permutation $\sigma_i$ to the vertices in $C_i$, the number of ascents increases by exactly \begin{equation}\text{des}(\sigma_i)=|\{(u,v): \ u,v\in C_i, \ u<v, \ \sigma_i(u)>\sigma_i(v)\}|.\end{equation}
We also note that $\sum_{\sigma\in S(C_i)}q^{\text{des}(\sigma)}=[|C_i|]_q!$. Therefore, we have
$$
a_\mu(q)=\sum_{\kappa\text{ of type }\mu}q^{\asc\kappa}=[|C_1|]_q!\cdots[|C_k|]_q!\sum_{\kappa\text{ initial of type }\mu} q^{\asc\kappa}.
$$
In particular, the coefficient $a_\mu(q)$ is divisible by $[|C_1|]_q!\cdots[|C_k|]_q!$ and so the coefficient $c_\mu(q)$ must be as well.
\end{proof}

\begin{figure}
\begin{tikzpicture}
\draw (1.25,0) node (){$K_{6,6}=$};
\filldraw (2,0) circle (3pt) node[align=center,above] (5){1};
\filldraw (2.5,0.866) circle (3pt) node[align=center,above] (6){2};
\filldraw (3.5,0.866) circle (3pt) node[align=center,above] (7){4};
\filldraw (2.5,-0.866) circle (3pt) node[align=center,below] (8){3};
\filldraw (3.5,-0.866) circle (3pt) node[align=center,below] (9){5};
\filldraw (4,0) circle (3pt) node[align=center,above] (10){6};

\draw (2,0)--(6,0);
\draw (2,0) -- (2.5,0.866) -- (3.5,0.866) -- (4,0) -- (3.5,-0.866) -- (2.5,-0.866) -- (2,0);
\draw (2,0) -- (3.5,-0.866) -- (3.5,0.866) -- (2,0);
\draw (4,0) -- (2.5,0.866) -- (2.5,-0.866) -- (4,0);
\draw (2.5,0.866) -- (3.5,-0.866);
\draw (2.5,-0.866) -- (3.5,0.866);
\filldraw (4.5,0.866) circle (3pt) node[align=center,above] (6){7};
\filldraw (5.5,0.866) circle (3pt) node[align=center,above] (7){9};
\filldraw (4.5,-0.866) circle (3pt) node[align=center,below] (8){8};
\filldraw (5.5,-0.866) circle (3pt) node[align=center,below] (9){10};
\filldraw (6,0) circle (3pt) node[align=center,above] (10){11};

\draw (4,0) -- (4.5,0.866) -- (5.5,0.866) -- (6,0) -- (5.5,-0.866) -- (4.5,-0.866) -- (4,0);
\draw (4,0) -- (5.5,-0.866) -- (5.5,0.866) -- (4,0);
\draw (6,0) -- (4.5,0.866) -- (4.5,-0.866) -- (6,0);
\draw (4.5,0.866) -- (5.5,-0.866);
\draw (4.5,-0.866) -- (5.5,0.866);
\draw(4,-2) node{$X_{K_{6,6}}(\bm x;q)=[5]_q![5]_q!(q^5e_{65}+q^4[3]_qe_{74}+q^3[5]_qe_{83}+q^2[7]_qe_{92}+q[9]_qe_{(10)1}+[11]_qe_{(11)})$};
\end{tikzpicture}
\caption{\label{fig:kchain66example} The graph $K_{6,6}$ and its chromatic quasisymmetric function $X_{K_{6,6}}(\bm x;q)$}
\end{figure}

\begin{example}
\label{ex:kab}
Let $K_{a,b}$ denote the graph with $n=a+b-1$ vertices obtained by gluing a clique of size $a$ and a clique of size $b$ at a single vertex, to be precise,
$$
K_{a,b}=([n],\{ij: \ 1\leq i\leq j\leq a\}\cup\{ij:a\leq i\leq j\leq n\}).
$$
Then the equivalence classes of the vertices of $K_{a,b}$ are
$C_1=\{1,\ldots,a-1\}$, $C_2=\{a\}$, and $C_3=\{a+1,\ldots,n\}$. The chromatic quasisymmetric function $X_{K_{a,b}}(\bm x;q)$ has the explicit formula \cite[Corollary 4.14]{qforesttriples}
\begin{equation}
\label{eq:kab}
  X_{K_{a,b}}(\bm x;q)=[a-1]_q![b-1]_q!\sum_{k=\max\{a,b\}}^nq^{n-k}[2k-n]_qe_{k(n-k)}.  
\end{equation}
Indeed, every coefficient $c_\lambda(q)$ is divisible by $[|C_1|]_q![|C_2|]_q![|C_3|]_q!=[a-1]_q![1]_q![b-1]_q!$.
\end{example}

\section{The Schur basis}\label{sec:Schur}

If $G$ is a natural unit interval graph then equation~\eqref{eq:aoq} gives an interpretation of the coefficient of $e_n$ in $X_G(\bm x;q)$ as a generating function for the acyclic orientations of $G$ with one sink.
In this section, we use the Schur basis to give another interpretation for the coefficient of $e_n$ in terms of tableaux. 
Throughout this section $G=([n],E)$ will be a natural unit interval graph. Gasharov \cite[Theorem 4]{chrompos31} proved a combinatorial formula for the Schur expansion of $X_G(\bm x)$ in terms of certain tableaux and Shareshian and Wachs \cite[Theorem 6.3]{chromposquasi} proved a $q$-analogue of this result. We will write our Young diagrams in English notation.

\begin{definition}
A \emph{$G$-tableau of shape $\lambda$} is a bijective filling $T$ of the diagram of $\lambda$ with the numbers $1$ through $n$ such that the following two conditions hold.
\begin{enumerate}
\item[(C1)] If an entry $i$ is directly to the left of an entry $j$ in the same row, then $i<j$ and $ij\notin E$.
\item[(C2)] If an entry $i$ is directly above an entry $j$ in the same column, then either $i<j$ or $ij\in E$.
\end{enumerate}
Let $\text{GTab}_\lambda$ denote the set of $G$-tableaux of shape $\lambda$. An \emph{inversion} of $T$ is a pair of entries $(i,j)$ such that
\begin{enumerate}
    \item[(I1)] $i<j$,
    \item[(I2)] $ij\in E$, and
    \item[(I3)] $i$ is in a lower row than $j$.    
\end{enumerate}
We denote by $\Inv T$ the set of inversions of $T$ and $\inv T=\#\Inv T$.
\end{definition}

\begin{theorem}[\cite{chromposquasi}]
\label{thm:Gtab}
The chromatic quasisymmetric function of a natural unit interval graph $G$ satisfies
\begin{equation}
\label{eq:s}
X_G(\bm x;q)=\sum_\lambda\sum_{T\in\text{GTab}_\lambda}q^{\inv T}s_\lambda.
\end{equation}
In particular, $X_G(\bm x;q)$ is Schur-positive.
\end{theorem}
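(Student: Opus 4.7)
The plan is to prove \eqref{eq:s} via a bijection. Using $s_\lambda=\sum_\mu K_{\lambda\mu}m_\mu$ together with the monomial expansion $X_G(\bm x;q)=\sum_\mu a_\mu(q)m_\mu$ appearing in the proof of Lemma~\ref{lem:alphaomega}, comparing coefficients of $m_\mu$ reduces the theorem to showing
$$
a_\mu(q)=\sum_\lambda K_{\lambda\mu}\sum_{T\in\text{GTab}_\lambda}q^{\inv T}
$$
for every composition $\mu$. Equivalently, we need a bijection $\kappa\longleftrightarrow(T,U)$ between proper colourings of type $\mu$ and pairs consisting of a $G$-tableau $T$ of some shape $\lambda$ and a semistandard Young tableau $U$ of that same shape and content $\mu$, satisfying $\asc\kappa=\inv T$.

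The bijection I would construct proceeds by an RSK-style insertion. Processing vertices $v=1,2,\ldots,n$ in increasing order, I grow a pair $(T,U)$ of equal shape one box at a time. When $v$ arrives with colour $\kappa(v)$, place $v$ into a specific cell of $T$ and write $\kappa(v)$ into the same cell of $U$. The natural unit interval condition \eqref{eq:uicondition} is essential here: it guarantees that the smaller neighbours of $v$ in $G$ form a clique of consecutive integers, which in turn forces a unique admissible cell compatible with the $G$-tableau conditions (C1) and (C2). The SSYT conditions on $U$ then follow, since any two vertices of the same colour form an independent pair in $G$ and are accordingly placed into distinct columns of $T$.

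The heart of the argument is the statistic identity $\asc\kappa=\inv T$. Condition (C1) forbids two adjacent vertices from sharing a row of $T$, so every edge $ij\in E$ with $i<j$ places $i$ and $j$ in distinct rows, and I would show inductively that $\kappa(i)<\kappa(j)$ holds precisely when $i$ lies in a strictly lower row than $j$, matching ascents with inversions. The main obstacle is establishing simultaneously that the insertion is well-defined, invertible, and statistic-preserving: checking that no spurious inversions are introduced, that the reverse extraction procedure (stripping off the cell containing the largest vertex label) uniquely recovers the previous colouring, and that the entire argument respects the $q$-grading. I expect \eqref{eq:uicondition} to be precisely the structural hypothesis required for all three properties to hold at every step, because without it the smaller neighbours of $v$ need not form a consecutive chain and the canonical placement cell is lost.
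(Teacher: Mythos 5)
First, a point of comparison: the paper does not actually prove Theorem~\ref{thm:Gtab}; it is quoted from Shareshian and Wachs, whose argument (like Gasharov's original $q=1$ version) proceeds by a sign-reversing involution on the terms of a monomial/Jacobi--Trudi expansion rather than by a bijection. So your route is genuinely different from the source. Your opening reduction is sound: since $s_\lambda=\sum_\mu K_{\lambda\mu}m_\mu$ and $X_G(\bm x;q)$ is symmetric, \eqref{eq:s} is equivalent to $a_\mu(q)=\sum_\lambda K_{\lambda\mu}\sum_{T\in\text{GTab}_\lambda}q^{\inv T}$ for all partitions $\mu$, and a weight-preserving bijection $\kappa\leftrightarrow(T,U)$ would establish this; such a bijection is even known at $q=1$ for unit interval orders (Sundquist--Wagner--West), so the strategy is not hopeless in principle.

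However, the two claims your plan rests on are both false as stated. (1) There is no ``unique admissible cell'': already for the edgeless graph on two vertices with $\kappa=(1,2)$, after placing vertex $1$ at $(1,1)$, vertex $2$ may go to $(1,2)$ or to $(2,1)$, and in either case $T$ satisfies (C1)/(C2) and $U$ remains semistandard. So \eqref{eq:uicondition} does not determine the placement; you need an actual insertion rule, and naive RSK bumping of the colours fails too (for the path $1$--$2$--$3$ with $\kappa=(1,2,1)$ it would put vertex $2$ to the right of vertex $1$, violating (C1)). Designing a modified insertion and proving it is well defined and invertible is the entire content of the proof, not a routine verification. (2) The edge-by-edge statement ``$\kappa(i)<\kappa(j)$ precisely when $i$ lies in a strictly lower row than $j$'' cannot hold: for the path $1$--$2$--$3$ with $\kappa=(1,2,1)$, the only valid pair of content $(2,1)$ has $T$ with first row $(1,3)$ and $2$ in the cell below $1$, so the unique ascent of $\kappa$ is on the edge $12$ while the unique inversion of $T$ is on the edge $23$; only the totals $\asc\kappa=\inv T$ agree, and proving that global equality is a separate, nontrivial argument that your sketch does not supply. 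As it stands, the proposal is a plan whose two load-bearing steps fail, so it does not constitute a proof of \eqref{eq:s}.
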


\begin{figure}
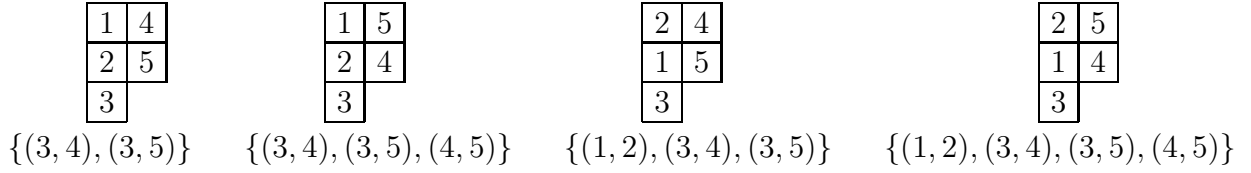

\begin{align*}
&\hspace{30pt} \tableau{1&4\\2&5\\3} \hspace{60pt} \tableau{1&5\\2&4\\3} \hspace{90pt} \tableau{2&4\\1&5\\3} \hspace{120pt} \tableau{2&5\\1&4\\3}\\
&\{(3,4),(3,5)\} \hspace{20pt} \{(3,4),(3,5),(4,5)\} \hspace{20pt} \{(1,2),(3,4),(3,5)\} \hspace{20pt} \{(1,2),(3,4),(3,5),(4,5)\}
\end{align*}
\caption{\label{fig:Gtab} The four $G$-tableaux of shape $\lambda=221$ for the bowtie graph $G$}
\end{figure}

\begin{example}
Let $G$ be the bowtie graph from Figure \ref{fig:alphaomegaexample}. The four $G$-tableaux of shape $\lambda=221$ and their inversion sets are given in Figure \ref{fig:Gtab}. By Theorem \ref{thm:Gtab}, the coefficient of $s_{221}$ in $X_G(\bm x;q)$ is $(q^2+2q^3+q^4)$. 
\end{example}

Now we can calculate coefficients in the elementary basis by first using Theorem \ref{thm:Gtab} and then converting Schur functions to elementary symmetric functions using the dual Jacobi--Trudi determinantal identity \cite[Theorem 4.5.1]{symgroup}, \cite[Corollary 7.16.2]{enum2},
\begin{equation}\label{eq:jt}
s_\lambda=\det(e_{\lambda'_i-i+j})_{i,j=1}^{\la_1}.
\end{equation}
In particular, the coefficient of $e_n$ in $s_\lambda$ is nonzero only if $\lambda$ is a \emph{hook}, meaning a partition of the form $\lambda=k1^{n-k}$ for some $1\leq k\leq n$, in which case this coefficient is $(-1)^{k-1}$. Let 
$$
\HT(G)=\{ T \mid \text{$T$ is a $G$-tableau of hook shape}\},
$$
and if $T\in\HT(G)$ has shape $k1^{n-k}$
then define its {\em sign} to be
$$
\sgn T=(-1)^{k-1}. 
$$
By Theorem \ref{thm:Gtab} and \eqref{eq:jt}, the coefficient of $e_n$ in $X_G(\bm x;q)$ is
\begin{equation}\label{eq:cnq}
c_n(q)=\sum_{T\in\HT(G)}(\sgn T) q^{\inv T}.
\end{equation}

We now define a sign-reversing involution $\varphi_G$ on $\HT(G)$ to obtain a positive combinatorial description for $c_n(q)$. The idea will be to move entries between the \emph{arm} and \emph{leg} of $T$, which are the first row and first column of $T$, respectively, excluding the corner entry in the $(1,1)$-cell. We will only move entries if we can preserve the inversion set.

\begin{definition}
Let $T\in\HT(G)$ be a $G$-tableau. An entry $j$ in the arm of $T$ is \emph{movable} if it can be moved to the leg of $T$ to produce a tableau 
$T^j$ such that
\begin{enumerate}
    \item[(M1)] $T^j\in\HT(G)$, and
    \item[(M2)] $\Inv T^j=\Inv T$.
\end{enumerate}
Similarly,  entry $j$ is {\em movable} from the leg to the arm if the resulting tableau $T^j$ satisfies (M1) and (M2).

\end{definition}

We will see shortly that if $j$ is movable then the position to which $j$ can be moved, and hence $T^j$, are uniquely determined.

\begin{example}

In the first line of Figure~\ref{move} we see a natural unit interval graph, $G$, and one of its $G$-tableaux, $T$, of hook shape.  It is easy to check that $\Inv T =\{ (2,3), (4,5)\}$.  Both $3$ and $5$ are movable and the resulting $G$-tableaux are displayed in the second line.  The elements $2$ and $4$ are not 
movable.

\end{example}

\begin{figure}
\begin{center}
\begin{tikzpicture}
\draw(-2,0) node{$G=$};
\filldraw(-1,0) circle(.1);
\draw(-1,-.3) node{$1$};
\filldraw(0,0) circle(.1);
\draw(0,-.3) node{$2$};
\filldraw(1,0) circle(.1);
\draw(1,-.3) node{$3$};
\filldraw(2,0) circle(.1);
\draw(2,-.3) node{$4$};
\filldraw(3,0) circle(.1);
\draw(3,-.3) node{$5$};
\draw plot [smooth, tension=2] coordinates {(0,0) (1,.7) (2,0)};
\draw (-1,0)--(3,0);
\draw(5.5,0) node{$T=$};
\draw (7,0) node{\tableau{1 & 3 & 5\\ 2\\ 4}};
\draw(-.5,-2) node{$T^3=$};
\draw(1,-2) node{\tableau{1&5\\ 3\\ 2 \\ 4}};
\draw(3.5,-2) node{$T^5=$};
\draw(5,-2) node{\tableau{1&3\\ 2\\ 5 \\ 4}};
\end{tikzpicture}
\end{center}
\caption{\label{move} A natural unit interval graph $G$, a $G$-tableau $T$, and two $T^j$}
\end{figure}
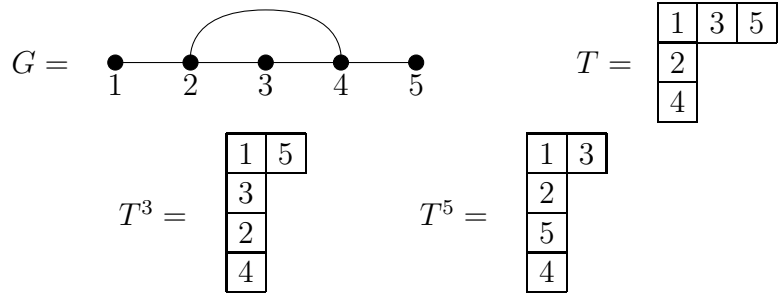

\begin{lemma} \label{lem:legtoarm}
Let $T\in\HT(G)$ be a $G$-tableau. 

\begin{enumerate}
\item[(a)] An entry $j$ in the leg of $T$ is movable if and only if every entry $i$ of $T$ in a row above $j$ satisfies $ij\notin E$. 
\item[(b)] If the entry $j$ is movable, there is a unique position to which it can be moved and every entry $i$ above $j$ in the leg of $T$ satisfies $i<j$. 
\end{enumerate}

\end{lemma}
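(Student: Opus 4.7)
The plan is to analyze (M1) and (M2) separately and show, using the natural unit interval property~\eqref{eq:uicondition}, that their conjunction is equivalent to the single edge condition in~(a). Throughout, write $j=b_r$ for the entry at position $(r,1)$ with $r\ge 2$, let $b_2,\ldots,b_{r-1}$ be the leg entries strictly above $j$, and let $a=T(1,1)$.

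For (M2), only inversions involving $j$ can change, since the relative row order of every other pair is preserved. In $T$ such inversions split into two types: $(i,j)$ with $i$ strictly below $j$, $i<j$, $ij\in E$; and $(j,i)$ with $i$ strictly above $j$, $j<i$, $ij\in E$. In $T^j$ the second type cannot occur, because $j$ now lies in row~$1$, while the first ranges over every leg entry of $T$ other than $j$. Forcing $\Inv T^j=\Inv T$ then amounts to two requirements: every leg entry $i$ above $j$ satisfies $ij\notin E$, and every row-$1$ entry $i>j$ satisfies $ij\notin E$. This is~(a) except for a potentially missing edge condition on row-$1$ entries smaller than $j$.

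To close that gap I would bring in (M1) and~\eqref{eq:uicondition}. Let $p_c$ and (when it exists) $p_{c+1}$ be the row-$1$ neighbors of $j$ in $T^j$, with $p_c<j<p_{c+1}$; the neighbor $p_c$ always exists, as movability forces $a<j$ (otherwise no insertion keeps row~$1$ increasing). The horizontal part of (C1) requires $p_cj\notin E$, and if $p_{c+1}$ exists, $p_{c+1}j\notin E$. If some row-$1$ entry $p$ of $T$ had $p<j$ and $pj\in E$, then either $p=p_c$, contradicting (M1), or $p<p_c<j$ and~\eqref{eq:uicondition} gives $p_cj\in E$, again a contradiction. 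Hence every row-$1$ entry $i<j$ also satisfies $ij\notin E$, completing the implication (M1)~$\wedge$~(M2)~$\Rightarrow$~(a). Conversely, assuming~(a), the (C1) check at $j$'s new position is immediate, and the only remaining $G$-tableau constraint is the column-$1$ adjacency between $b_{r-1}$ (or $a$ when $r=2$) and $b_{r+1}$: by (C2) in $T$, either $j<b_{r+1}$, giving $b_{r-1}<j<b_{r+1}$ directly, or $jb_{r+1}\in E$, in which case $b_{r+1}<b_{r-1}$ combined with~\eqref{eq:uicondition} would force $b_{r-1}j\in E$, contradicting~(a).

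For (b), the uniqueness of the insertion position is immediate from (C1): the row-$1$ sequence of $T$ is strictly increasing, so $j$ has a unique sorted slot. The inequality $i<j$ for every leg entry $i$ above $j$ I would establish by induction up column~$1$. The entry $b_{r-1}$ satisfies $b_{r-1}<j$ or $b_{r-1}j\in E$ by (C2), and (a) kills the second; inductively, if $b_s<j$ for all $s>s_0$, then (C2) at $(b_{s_0},b_{s_0+1})$ gives either $b_{s_0}<b_{s_0+1}<j$ directly or $b_{s_0}b_{s_0+1}\in E$, in which case $b_{s_0}>j$ would make $b_{s_0+1}<j<b_{s_0}$ and~\eqref{eq:uicondition} force $b_{s_0+1}j\in E$, contradicting~(a). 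The main subtle point will be these repeated applications of~\eqref{eq:uicondition} to promote local edge conditions into the uniform statement in~(a); the rest is straightforward bookkeeping about inversions and the sorted row.
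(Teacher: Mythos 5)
Your proof is correct and follows essentially the same route as the paper's: a bookkeeping argument for exactly which inversions involving $j$ can change, combined with repeated use of (C1), (C2), and the natural unit interval condition to propagate non-adjacency up the first row and column. The only cosmetic differences are that you split the row-one entries between the (M2) and (M1) analyses where the paper handles all of row one at once via (C1), and you prove part (b) by induction up the column rather than by locating a consecutive pair $x$ over $y$ with $x>j>y$.
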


\begin{proof}
(a) If $j$ is movable, then we must have $ij\notin E$ for every entry $i$ in the first row of $T$ by (C1) and because $G$ is a natural unit interval graph. We also must have $ij\notin E$ for every entry $i$ in the leg of $T$ above $j$ because otherwise there would be a change in inversions. 

Conversely, suppose that $ij\notin E$ for every entry $i$ above $j$ and let $T^j$ be the tableau obtained by moving $j$ to the arm so that it is strictly increasing, so (C1) holds. We have $\Inv T^j=\Inv T$ because the only change in relative positions occurred between non-adjacent vertices of $G$. If there are entries $x$ and $y$ respectively directly above and below $j$ in $T$, we cannot have $x>y$ and $xy\notin E$ because then either $j>x>y$ and $yj\notin E$ because $G$ is a natural unit interval graph, violating (C2), or $j<x$  and $xj\notin E$ by assumption, again violating (C2). Therefore, the tableau $T^j$ is indeed a $G$-tableau and the entry $j$ is movable.

(b) First note that uniqueness holds because the first row of $T^j$ must be strictly increasing by (C1).  Next, suppose that $j$ is movable and there is an entry $i>j$ above $j$ in the leg of $T$. We have shown that $ij\notin E$ so $i$ must not be directly above $j$ by (C2). Therefore, there is some other entry $i'$ directly above $j$, and $i'j\not\in E$ because $j$ is movable, so $i'<j$. But now because $i>j>i'$, there must be entries $x$ and $y$ in the leg of $T$ above $j$ with $x$ directly above $y$ and $x>j>y$. But because $j$ is movable we have $jx\notin E$, and $xy\notin E$ because $G$ is a natural unit interval graph, which violates (C2).
\end{proof}

\begin{lemma} \label{lem:armtoleg}
Let $T\in\HT(G)$ be a $G$-tableau. Every entry in the arm of $T$ is movable and can be moved to a unique position.
\end{lemma}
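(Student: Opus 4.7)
The plan is to explicitly construct the unique row of the leg at which $j$ should be placed. Let $\ell_2,\ldots,\ell_m$ denote the leg entries of $T$ read from top to bottom, where $m=n-k+1$. A preliminary observation, used repeatedly, is that (C1) combined with (\ref{eq:uicondition}) forces row $1$ of $T$ to be an independent set in $G$: consecutive entries in row $1$ are non-adjacent by (C1), and for any non-consecutive pair $a_i,a_j$ with $j>i+1$, if $a_i a_j\in E$ then (\ref{eq:uicondition}) applied to $(a_i,a_{i+1},a_j)$ would force $a_i a_{i+1}\in E$, a contradiction.

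Given an arm entry $j$, I would define $r^*$ to be the smallest $r\in\{2,\ldots,m+1\}$ such that either $r=m+1$, $\ell_r>j$, or $\ell_r j\in E$; this is well-defined since $r=m+1$ trivially qualifies. The claim is that $T^j$, obtained by inserting $j$ into column $1$ at row $r^*$, witnesses movability. To verify (M1), note that (C1) persists because row $1$ of $T^j$ is a subset of the independent row $1$ of $T$. For (C2) in column $1$, only the two new adjacencies $(\ell_{r^*-1},j)$ and $(j,\ell_{r^*})$ need attention: the minimality of $r^*$ yields $\ell_{r^*-1}<j$ with $\ell_{r^*-1}j\notin E$, while the defining property of $r^*$ gives $\ell_{r^*}>j$ or $\ell_{r^*}j\in E$. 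For (M2), inversions avoiding $j$ are preserved because all other entries retain their relative row order. For inversions involving $j$, before the move these are exactly the pairs $(i,j)$ with $i\in A_j\cap\text{leg}$ where $A_j=\{i<j:ij\in E\}$. In $T^j$, every entry above $j$ (the remaining row $1$, and $\ell_2,\ldots,\ell_{r^*-1}$) is non-adjacent to $j$ by the row $1$ observation and the minimality of $r^*$ respectively, so no $j$-inversions come from above; the entries below $j$, namely $\ell_{r^*},\ldots,\ell_m$, yield the same $A_j$-contributions as before because the omitted entries $\ell_2,\ldots,\ell_{r^*-1}$ all lie outside $A_j$.

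The main obstacle is uniqueness, and this is where (\ref{eq:uicondition}) is essential. Suppose some $r\neq r^*$ also yielded a valid $T^j$. If $r<r^*$, then by the minimality of $r^*$ we have $\ell_r<j$ and $\ell_r j\notin E$, so (C2) fails at the pair $(j,\ell_r)$. If $r>r^*$, then (M2) forces every entry above $j$ in $T^j$ to be non-adjacent to $j$, so in particular $\ell_{r^*}j\notin E$; combined with the definition of $r^*$, this gives $\ell_{r^*}>j$. Meanwhile (C2) at $(\ell_{r-1},j)$ combined with the non-adjacency forces $\ell_{r-1}<j$. If $r^*=r-1$ these two conclusions contradict each other. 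Otherwise let $s$ be the largest index in $[r^*,r-2]$ with $\ell_s>j$, so that $\ell_{s+1}<j$. The descent $\ell_s>\ell_{s+1}$ in the leg of $T$ forces $\ell_s\ell_{s+1}\in E$ by (C2), and then applying (\ref{eq:uicondition}) to $\ell_{s+1}<j<\ell_s$ with $\ell_{s+1}\ell_s\in E$ yields $\ell_{s+1}j\in E$, contradicting the non-adjacency requirement for $\ell_{s+1}$, which lies above $j$ in $T^j$.
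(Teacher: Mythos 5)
Your proof is correct and follows essentially the same route as the paper's: you identify the same greedy insertion position (the first leg row where ``smaller and non-adjacent'' fails), verify (C1), (C2), and inversion-preservation the same way, and rule out higher positions via (C2). The only difference is in ruling out lower positions, where the paper invokes Lemma~\ref{lem:legtoarm} as a black box while you re-derive the needed facts inline (the descent $\ell_s>\ell_{s+1}$ forcing an edge and hence, by \eqref{eq:uicondition}, adjacency to $j$); this is the same underlying argument.
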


\begin{proof}
Let $j$ be an entry in the arm of $T$ and let $x_1,\ldots,x_\ell$ be the entries of the first column of $T$ from top to bottom. By (C1) and the fact that $G$ is a natural unit interval graph, we have $x_1<j$ and $x_1j\notin E$. Therefore, there is some maximal index $i$ with $1\leq i\leq \ell$ such that we have $x_t<j$ and $x_tj\notin E$ for every $1\leq t\leq i$. We claim that the entry $j$ is movable to the position directly below the entry $x_i$. The first row of the resulting tableau $T^j$ satisfies (C1) because $G$ is a natural unit interval graph. We have $x_i<j$ by definition and we cannot have both $x_{i+1}<j$ and $x_{i+1}j\notin E$ by maximality of $i$, so the first column of $T^j$ satisfies (C2). We have $\Inv T^j=\Inv T$ because the entries in the arm of $T^j$ and the entries $x_1,\ldots,x_i$ that are now above $j$ are not adjacent to $j$ so there is no change in inversions. 

Finally, the entry $j$ could not have moved to a higher position because (C2) would not hold by definition of $i$. Also, the entry $j$ could not have moved to a lower position because then it would be movable in $T^j$, which means by Lemma \ref{lem:legtoarm} that $x_{i+1}<j$ and $x_{i+1}j\notin E$, contradicting maximality of $i$.
\end{proof}

Note that by Lemma \ref{lem:legtoarm} and Lemma \ref{lem:armtoleg}, movable entries can be moved to a unique position so the tableau $T^j$ is well-defined, as promised.

\begin{definition}
Let $T\in\HT(G)$ be a $G$-tableau. We define the $G$-tableau
\begin{equation}
\varphi_G(T)=\begin{cases} T&\text{ if }T\text{ has no movable entry,}\\ T^j&\text{ if }T\text{ has smallest movable entry }j.\end{cases}
\end{equation}
We denote by $\Fix\varphi_G$ the set of \emph{fixed points} of $\varphi_G$, meaning those tableaux $T$ for which $\varphi_G(T)=T$.
\end{definition}

We now show that the map $\varphi_G:\HT(G)\to\HT(G)$ is an inv-preserving, sign-reversing involution.

\begin{lemma}\label{lem:signrev}
We have $\varphi_G(\varphi_G(T))=T$ and $\inv \varphi_G(T)=\inv T$ for every $T\in\HT(G)$. Also
$$
\sgn \varphi_G(T) =
\begin{cases}
1   & \text{if $T\in\Fix\varphi_G$,}\\
-\sgn T & \text{else.}
\end{cases}
$$
\end{lemma}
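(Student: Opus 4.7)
The plan is to verify the three claimed properties (inversion-preservation, sign-reversal, and involution) separately. Two of them fall out of the definitions; the involution property is where the work lies.

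Inversion-preservation is immediate from condition (M2): when $\varphi_G(T)=T^j$ we have $\Inv T^j = \Inv T$ by definition of movability, and when $T$ is a fixed point there is nothing to check. For the sign formula, observe that moving an entry between the arm and leg changes the number of columns by exactly one. If $T$ has shape $k 1^{n-k}$ and $\varphi_G(T)\neq T$, then $T^j$ has shape $(k\pm 1) 1^{n-k\mp 1}$, so $\sgn T^j = (-1)^{(k\pm 1)-1} = -\sgn T$; on $\Fix\varphi_G$ the sign is whatever it is, giving the case split in the displayed formula.

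The substantive claim is $\varphi_G(\varphi_G(T))=T$ for $T\notin\Fix\varphi_G$. Let $j$ be the smallest movable entry of $T$ and write $T'=T^j$. I must show (i) $j$ is movable in $T'$ with $(T')^j = T$, and (ii) no $k<j$ is movable in $T'$, so that $\varphi_G(T')=T$.

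For (i), suppose first that $j$ lies in the arm of $T$. By Lemma~\ref{lem:armtoleg}, $j$ moves to the cell directly below some leg entry $x_i$, where $x_1,\ldots,x_i$ are the leg entries above that position in $T$ and each satisfies $x_t<j$ and $x_t j\notin E$. To confirm via Lemma~\ref{lem:legtoarm}(a) that $j$ is movable in $T'$, I must check that every entry in a row above $j$ in $T'$ is non-adjacent to $j$. Such entries come in two flavors: the new arm entries of $T'$ (which are the arm entries of $T$ other than $j$), and the leg entries $x_1,\ldots,x_i$. The latter are non-adjacent to $j$ by construction; for the former, I iterate the contrapositive of \eqref{eq:uicondition} along the arm of $T$ (whose consecutive entries are non-adjacent by (C1)) to conclude that every arm entry of $T$ is non-adjacent to $j$. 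Lemma~\ref{lem:legtoarm}(b) then pins down the position uniquely, so $(T')^j = T$. The case where $j$ lies in the leg of $T$ is handled symmetrically using Lemma~\ref{lem:armtoleg}.

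For (ii), which I expect to be the main obstacle, suppose for contradiction that some $k<j$ is movable in $T'$. Since $j$ is the only entry whose cell has changed, $k$ occupies the same cell (arm, leg, or corner) in $T$ and $T'$. I compare the sets $A_T(k)$ and $A_{T'}(k)$ of entries in rows strictly above $k$ in each tableau: these sets differ by at most whether $j$ is present. I then split into cases based on the direction of $j$'s motion and on $k$'s location, using that $jk\notin E$ whenever $j$ was adjacent (in the hook sense) to $k$ during the move — this follows from the very conditions that made $j$ movable. In each case either $j\notin A_T(k)\cup A_{T'}(k)$, or $j$ belongs to exactly one of the two sets but $jk\notin E$ so it does not affect the movability criterion. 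This forces $k$ to have been movable already in $T$, contradicting minimality of $j$. Combining (i) and (ii) gives $\varphi_G(T')=T$, completing the proof.
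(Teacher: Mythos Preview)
Your overall strategy matches the paper's: reduce the involution claim to (i) showing that $j$ is again movable in $T'=T^j$ with $(T')^j=T$, and (ii) ruling out any smaller movable entry, using Lemma~\ref{lem:legtoarm} for leg entries and Lemma~\ref{lem:armtoleg} for arm entries.

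There is, however, a genuine gap in your treatment of the sign formula. You write that ``on $\Fix\varphi_G$ the sign is whatever it is,'' but the lemma asserts $\sgn\varphi_G(T)=1$ for fixed $T$, not merely $\sgn\varphi_G(T)=\sgn T$. To get the value $1$ you must show that every fixed point has shape $1^n$. This is an immediate consequence of Lemma~\ref{lem:armtoleg}: since every arm entry is movable, a tableau with no movable entries has empty arm and hence a single column. The paper makes exactly this observation; without it the fixed-point case of the displayed formula is unproved.

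Your handling of (ii) is also a bit loose. The dichotomy ``either $j\notin A_T(k)\cup A_{T'}(k)$, or $j$ belongs to exactly one of the two sets'' omits the case $j\in A_T(k)\cap A_{T'}(k)$ (which does occur, for instance when $j$ moves from the arm into a leg position above $k$); there $A_T(k)=A_{T'}(k)$ and the conclusion is immediate. Moreover your first alternative is vacuous for $k$ in the leg, since $j$ occupies row $1$ in one of $T,T'$ and hence lies above $k$ there. The paper's argument is tighter: for $k$ in the arm invoke Lemma~\ref{lem:armtoleg} directly; for $k$ in the leg, the only way $k$ can become movable is if $j$ went from the arm of $T$ to a leg position of $T'$ \emph{below} $k$ with $jk\in E$, but then part (i) says $j$ is movable in $T'$, so Lemma~\ref{lem:legtoarm}(a) forces $jk\notin E$, a contradiction. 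Your sketch is repairable along these lines, but as written it does not quite close.
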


\begin{proof}
This first statement is clear if $T$ has no movable entry because $\varphi_G(T)=T$, so suppose that $T$ has smallest movable entry $j$, so that $\varphi_G(T)=T^j$. We have $\inv T^j=\inv T$ by definition of $\varphi_G$.
By construction, the entry $j$ of $T^j$ is movable and the unique position to which it can be moved was its original position in $T$. However, we need to check that the entry $j$ of $T^j$ is in fact the smallest movable entry. We will show that if a smaller entry $i<j$ of $T^j$ is movable, then $i$ was movable in $T$, contradicting minimality of $j$. 

By Lemma \ref{lem:armtoleg} (a), every entry in the arm of a hook tableau is movable. So if $i$ were in the arm of $T^j$ then it would have been movable in $T$.   Now  suppose that $i$ is in the leg of $T^j$. By Lemma \ref{lem:legtoarm}, the condition for $i$ to be movable is that it is not adjacent to any entries in a higher row. 
So  $i$  movable in $T^j$ but not $T$ means that it is not adjacent to everything above it in $T^j$ but adjacent to something above in  $T$.  This can only happen if $j$ is in the arm of $T$, moves to the leg of $T^j$ below $i$, and $ij\in E$.  But $j$ being movable in $T^j$ implies $ij\notin E$ which is a contradiction.

Finally, if $T\in\Fix\varphi_G$, then $T$ has no movable entry.
By Lemma \ref{lem:armtoleg}, there are no elements in the arm so that $T$ has shape $1^n$ and $\sgn T =1$. If $T\notin\Fix\varphi_G$, then $\varphi_G$ changes the number of columns in passing from $T$ to $\varphi_G(T)$ by exactly one.  It follows that $\sgn \varphi_G(T) = -\sgn T$ which completes the proof.
\end{proof}

\begin{corollary}
Let $G$ be a natural unit interval graph. Then the coefficient of $e_n$ in $X_G(\bm x;q)$ is given by
\begin{equation}
c_n(q)=\sum_{T\in\Fix\varphi_G}q^{\inv T}.
\end{equation}
\begin{proof}
By Lemma \ref{lem:signrev}, the map $\varphi_G$ is an inversion-preserving bijection between the positively-signed non-fixed points and the negatively-signed non-fixed points, so the sum in \eqref{eq:cnq} is equal to the sum over the fixed points, which are positively signed.
\end{proof}
\end{corollary}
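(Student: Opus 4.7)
The plan is to apply the standard sign-reversing involution argument directly, using $\varphi_G$ and Lemma \ref{lem:signrev} to cancel all signed contributions except those coming from $\Fix \varphi_G$.

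First, I would start from the formula
$$
c_n(q) = \sum_{T \in \HT(G)} (\sgn T)\, q^{\inv T}
$$
derived in \eqref{eq:cnq} via Theorem \ref{thm:Gtab} combined with the dual Jacobi--Trudi identity \eqref{eq:jt}. Next I would partition $\HT(G)$ as $\Fix \varphi_G \sqcup (\HT(G) \setminus \Fix \varphi_G)$ and split the sum accordingly.

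For the second piece, I would group the non-fixed points into pairs $\{T, \varphi_G(T)\}$. By Lemma \ref{lem:signrev}, the map $\varphi_G$ is an involution, so this is a genuine pairing; moreover $\inv \varphi_G(T) = \inv T$ and $\sgn \varphi_G(T) = -\sgn T$ whenever $T \notin \Fix \varphi_G$. Thus the contributions from each such pair cancel, and the sum over non-fixed points vanishes.

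For the first piece, Lemma \ref{lem:signrev} ensures that every $T \in \Fix \varphi_G$ has shape $1^n$ and hence $\sgn T = +1$. Therefore what remains is precisely $\sum_{T \in \Fix \varphi_G} q^{\inv T}$, giving the claimed identity. There is no real obstacle here: all the work has been done in Lemmas \ref{lem:legtoarm}, \ref{lem:armtoleg}, and \ref{lem:signrev}, which establish that $\varphi_G$ is a well-defined, inv-preserving, sign-reversing involution; the corollary is the immediate bookkeeping consequence.
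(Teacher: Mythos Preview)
Your proposal is correct and follows exactly the same approach as the paper: both invoke Lemma~\ref{lem:signrev} to pair off non-fixed points with opposite sign and equal $\inv$, leaving only the positively-signed fixed points in the sum~\eqref{eq:cnq}. Your write-up simply spells out the pairing and cancellation in slightly more detail than the paper's one-sentence version.
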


We now describe the fixed points of $\varphi_G$ to get an explicit formula for $c_n(q)$ which will follow from Corollary~\ref{qcor} below.

\begin{theorem}\label{thm:coefen}
Let $G=([n],E)$ be a natural unit interval graph. For $2\leq k\leq n$, let $b_k$ denote the number of smaller neighbours of vertex $k$ in $G$. Then the coefficient of $e_n$ in the chromatic quasisymmetric function $X_G(\bm x;q)$ is 
\begin{equation}
c_n(q)=[n]_q[b_2]_q[b_3]_q\cdots[b_n]_q.
\end{equation}
\end{theorem}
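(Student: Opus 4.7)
The plan is to combine the preceding corollary, which gives $c_n(q) = \sum_{T \in \Fix \varphi_G} q^{\inv T}$, with an explicit characterization of $\Fix \varphi_G$ to derive the product formula. By Lemma~\ref{lem:armtoleg}, every entry in the arm of a hook $G$-tableau is movable, so every fixed point of $\varphi_G$ has the single-column shape $1^n$. Applying Lemma~\ref{lem:legtoarm}(a), the fixed points are exactly the permutations $(u_1, \ldots, u_n)$ of $[n]$, written top to bottom down a single column, that satisfy both condition (C2) and the property that for each $k \geq 2$ some $u_i$ with $i < k$ is adjacent to $u_k$ in $G$.

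I would then prove the formula by induction on $n$, removing vertex $n$. Let $G' = G|_{[n-1]}$. Given a fixed point $T$ of $\varphi_G$ with vertex $n$ at column position $p$, deleting $n$ produces a sequence $T' = (w_1, \ldots, w_{n-1})$ that one can check lies in $\Fix \varphi_{G'}$. Conversely, for each $T' \in \Fix \varphi_{G'}$, the positions at which vertex $n$ can be inserted back into $T'$ are controlled by the set $S = \{s_1 < \cdots < s_{b_n}\}$ of positions in $T'$ occupied by the smaller neighbors of $n$: condition (C2) forces $p \in S \cup \{n\}$ and the fixed-point condition at $n$ forces $p > s_1$ unless $p = 1$ and $s_1 = 1$. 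Computing the inversion contribution of each allowed insertion as $\#\{j : s_j \geq p\}$ and summing then produces $[b_n + 1]_q \, q^{\inv T'}$ when $s_1 = 1$ and $[b_n]_q \, q^{\inv T'}$ otherwise.

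The main obstacle is that these two cases make the total generating function depend on the top entry of each $T'$. To finish, I would partition $\Fix \varphi_{G'}$ into type $\mathcal{A}$, consisting of those $T'$ whose top entry $w_1$ is a smaller neighbor of $n$, and type $\mathcal{B}$, consisting of the rest, with generating functions $A(q)$ and $B(q)$ satisfying $A(q) + B(q) = c_{n-1}^{G'}(q)$ by the induction hypothesis. This gives $c_n^G(q) = [b_n + 1]_q A(q) + [b_n]_q B(q)$, and the desired formula is equivalent to the auxiliary identity $[n-1-b_n]_q \, A(q) = q^{n-1-b_n}\, [b_n]_q \, B(q)$. Establishing this identity---presumably the content of the corollary referenced in the sentence immediately preceding the theorem---is the key technical hurdle; it should follow from a separate bijective or recursive argument that isolates how the fixed point structure interacts with the top entry $w_1$, perhaps by directly bijecting $\Fix \varphi_G$ with the product set $\{0, \ldots, n-1\} \times \prod_{k=2}^n \{0, \ldots, b_k - 1\}$ in a statistic-preserving way.
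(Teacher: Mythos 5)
Your reduction to single-column fixed points, your deletion--insertion analysis at the vertex $n$, and your two-case count (a factor $[b_n+1]_q$ when the top entry of $T'$ is a smaller neighbour of $n$, and $[b_n]_q$ otherwise) all match the paper's Lemma~\ref{lem:fixind}, and your auxiliary identity $[n-1-b_n]_q\,A(q)=q^{n-1-b_n}\,[b_n]_q\,B(q)$ is indeed exactly what is needed to finish. But as written your argument has a genuine gap: your induction hypothesis is only the theorem itself, i.e., only the value of $A(q)+B(q)$, and this does not determine $A(q)$ and $B(q)$ separately, so the identity you need cannot be extracted from it. You flag this yourself, but you only speculate about a statistic-preserving bijection with $\{0,\ldots,n-1\}\times\prod_{k=2}^n\{0,\ldots,b_k-1\}$ without constructing one; until that step is supplied, the induction does not close.

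The paper closes precisely this gap by strengthening the inductive statement: Corollary~\ref{qcor} proves the refinement
\begin{equation*}
\sum_{T\in\Fix\varphi_G,\ T_1=j}q^{\inv T}=q^{j-1}[b_2]_q\cdots[b_n]_q\qquad(1\leq j\leq n),
\end{equation*}
by the same deletion--insertion induction, and the theorem follows by summing $q^{j-1}$ over $j$ to produce the $[n]_q$ factor. The ingredient that makes the refinement self-propagating, and which your write-up never invokes, is that in a natural unit interval graph the smaller neighbours of $n$ form the consecutive interval $\{n-b_n,\ldots,n-1\}$; hence whether $T'$ is of your type $\mathcal{A}$ or type $\mathcal{B}$ depends only on the value $j=T'_1$ (namely on whether $j\geq n-b_n$), so conditioning on the top entry is exactly the right extra statistic to carry through the induction. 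Concretely, the refinement gives $A(q)=q^{n-1-b_n}[b_n]_q[b_2]_q\cdots[b_{n-1}]_q$ and $B(q)=[n-1-b_n]_q[b_2]_q\cdots[b_{n-1}]_q$, which verifies your identity; in the paper's phrasing, insertions keeping $T_1=j<n$ contribute a clean factor $[b_n]_q$ independent of type, while top insertions contribute $q^{b_n}\sum_{i=n-b_n}^{n-1}q^{i-1}=q^{n-1}[b_n]_q$. So the repair is not a new bijection but a one-line strengthening of your inductive statement. Two smaller omissions: the claim that deleting $n$ from a fixed point lands in $\Fix\varphi_{G'}$ requires a genuine check using the unit-interval condition (carried out in Lemma~\ref{lem:fixind}), and you should dispose of the degenerate case $b_n=0$, where $\Fix\varphi_G=\emptyset$ and both sides of the formula vanish.
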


By considering the reverse graph obtained by relabeling vertex $k$ to $n-k+1$, we could equivalently think about the number of larger neighbours of each vertex. 

\begin{example}
For the bowtie graph $G$ from Figure \ref{fig:alphaomegaexample}, the coefficient of $e_5$ in $X_G(\bm x;q)$ is \begin{equation}
c_5(q)=[5]_q[2]_q[2]_q=(1+q+q^2+q^3+q^4)(1+q)^2.
\end{equation}
\end{example}

We will use induction on $n$ to enumerate the fixed points of $\varphi_G$. Suppose that $n\geq 2$ and let $G'$ denote the graph $G$ with the vertex $n$ removed. For a tableau $T$ of column shape, we denote by $T_i$ the $i$-th entry from the top.

\begin{lemma}\label{lem:fixind}
If $b_n=0$, then $\Fix\varphi_G=\emptyset$. Otherwise, the tableaux $T\in\Fix\varphi_G$ arise precisely by taking a tableau $T'\in\Fix\varphi_{G'}$ and inserting the entry $n$ either at the bottom, directly above a neighbour of $n$ other than the one occurring highest in $T'$, or at the top if $T'_1$ is a neighbour of $n$. 
\end{lemma}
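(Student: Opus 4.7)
The plan is to exploit the characterization implicit in Lemmas~\ref{lem:legtoarm} and~\ref{lem:armtoleg}: $T\in\Fix\varphi_G$ if and only if $T$ has column shape $1^n$ and every entry at a position $i\ge 2$ has at least one $G$-neighbour among the entries strictly above it. The natural unit interval condition~\eqref{eq:uicondition} will be the main technical tool used to transfer this witness property between $G$ and $G'$.

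First I would dispose of $b_n=0$. In any column $G$-tableau $T$ with $n$ at position $p$, either $p=1$ and (C2) at position $1$ forces $T_2$ to be a smaller neighbour of $n$ (impossible), or $p\ge 2$ and no entry above $n$ is adjacent to $n$, so $n$ is movable to the arm by Lemma~\ref{lem:legtoarm}(a) and $T\notin\Fix\varphi_G$.

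For $b_n\ge 1$, I would verify the stated bijection in two directions. The forward direction takes $T'\in\Fix\varphi_{G'}$ together with an allowed insertion site $p$ and produces $T$; it is straightforward because (C2) at the one or two new consecutive pairs reduces precisely to the ``$T'_p$ a neighbour of $n$'' or ``$T'_1$ a neighbour of $n$'' condition built into each of the three allowed insertions, and non-movability is inherited at every old entry since inserting $n$ can only enlarge the set of entries above a given position. Non-movability of $n$ itself is vacuous when $p=1$ (corner), automatic when $p=n$ (using $b_n\ge 1$), and guaranteed by the ``non-highest neighbour'' clause when $2\le p\le n-1$.

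The main obstacle is the backward direction: given $T\in\Fix\varphi_G$ with $n$ at position $p$, I must verify that $T'$, obtained by deleting $n$, still lies in $\Fix\varphi_{G'}$. The subtle case is an entry $j$ strictly below $n$ in $T$ whose only adjacent witness above in $T$ is $n$ itself, because deleting $n$ then apparently removes the witness. Here I would invoke~\eqref{eq:uicondition}: I produce a second smaller neighbour $w$ of $n$ lying above $j$ in $T'$ by taking either $w=T_k$ for some $k<p$ supplied by the non-movability of $n$ (when $p\ge 2$), or $w=T'_1=T_2$ forced by (C2) at position $1$ of $T$ (when $p=1$). Since $jn,wn\in E$ and $w,j,n$ are distinct, whichever of $w<j$ or $j<w$ holds, applying~\eqref{eq:uicondition} to the triple $\{w,j,n\}$ yields $wj\in E$, giving the required witness above $j$ in $T'$. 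Finally the position $p$ is classified: $p=n$ corresponds to ``at the bottom''; $p=1$ falls under ``at the top'' with $T'_1$ a neighbour (from (C2)); and $2\le p\le n-1$ falls under ``directly above a non-highest neighbour'' since (C2) forces $T'_p$ to be a neighbour while the above-produced $T_k$ is a strictly higher neighbour of $n$ in $T'$. Uniqueness of the preimage is clear because $(T',p)$ determines $T$ and $T$ determines both in turn.
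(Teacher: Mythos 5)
Your overall strategy is the same as the paper's: characterize the fixed points as the column-shape $G$-tableaux in which every entry below the top has a $G$-neighbour above it, dispose of $b_n=0$ by showing $n$ would be movable (or would break (C2) at the top), and then verify the insertion/deletion correspondence in both directions, using \eqref{eq:uicondition} to transfer adjacency witnesses. Your treatment of the witness problem in the backward direction is a constructive variant of the paper's argument: where the paper assumes an entry $j$ becomes movable in $T'$ and derives a contradiction, you directly manufacture a second neighbour $w$ of $n$ above $j$ (from non-movability of $n$ when $p\ge 2$, or from (C2) at the top when $p=1$) and apply \eqref{eq:uicondition} to the triple $\{w,j,n\}$ to get $wj\in E$. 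The two arguments use the same inputs and are essentially equivalent; yours is arguably a bit more transparent.

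There is one missing verification in the backward direction. Your opening ``if and only if'' characterization of $\Fix\varphi_G$ quietly presupposes that the filling is a $G$-tableau, and the witness property alone does not imply condition (C2): for the path on $\{1,2,3\}$ with edges $12$ and $23$, the column with entries $2,3,1$ from top to bottom has a neighbour above each non-top entry but violates (C2) at the pair $(3,1)$. So when you delete $n$ from position $p$ you must still check that the newly adjacent pair $x=T_{p-1}$, $y=T_{p+1}$ satisfies (C2) in $T'$ --- the paper devotes the first step of its converse to exactly this --- and your proposal never mentions it. The repair is one line of the kind you already use: if $x>y$ and $xy\notin E$, then $y<x<n$ together with \eqref{eq:uicondition} forces $yn\notin E$, contradicting (C2) for the pair $(n,y)$ in $T$. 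With that sentence added, your proof is complete; everything else (the $b_n=0$ case, the three-way check of the forward direction, the classification of the position of $n$, and uniqueness of the correspondence) is correct and matches the paper.
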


\begin{proof}
If $b_n=0$, then a $G$-tableau $T\in\text{GTab}_{1^n}$ must have the entry $n$ at the bottom to satisfy (C2).
But then the $n$ is movable by Lemma \ref{lem:legtoarm}. So assume that $b_n\geq 1$. A tableau $T$ constructed from $T'$ in one of the three given ways will satisfy (C2) because $n$ is either at the bottom or directly above a neighbour, and will have no movable elements by Lemma \ref{lem:legtoarm} because $n$ is either at the top or below a neighbour, so we have $T\in\Fix\varphi_G$. We now show that a tableau $T\in\Fix\varphi_G$ must arise in this way. 

Let $T'$ denote the tableau obtained from $T$ by removing the entry $n$. 
We must first show that $T'$ still satisfies (C2).  Suppose, to the contrary, that 
there are entries $x$ and $y$ respectively above and below the entry $n$ in $T$ with $x>y$ and $xy\notin E$. But since  $G$ is a natural unit interval graph this would force $yn\notin E$ since $n>y$.  So $T$ would not satisfy (C2), a contradiction.

Also we claim that  $T'$ has no movable elements. Suppose that $T'$ has a movable element  $j$. By Lemma \ref{lem:legtoarm}, every element $i$ above $j$ in $T'$ must satisfy $i<j$ and $ij\notin E$. Because $j$ is not movable in $T$, the entry $n$ must be above $j$ in $T$ and $jn\in E$ by Lemma \ref{lem:legtoarm} (a). Now every entry $i$ above $n$ in $T$ satisfies $i<j<n$ which implies $in\notin E$ because $G$ is a natural unit interval graph. 
So, by Lemma \ref{lem:legtoarm} (a) again, the only way for $n$ to not be movable in $T$ is if 
$T_1=n$. If $T_2=j$, then $T'_1=j$ so $j$ is not movable in $T'$.  Otherwise, $T_2$ must be some other entry $k$. However, because $k$ is above $j$ in $T'$, we have shown that $k<j$ and $kj\notin E$. 
This  forces $k<n$ and $kn\notin E$ so that $T$ does not satisfy (C2). 
 This final contradiction shows that no element of $T'$ is movable.

Therefore $T'\in\Fix\varphi_G$ and indeed $T$ arises by inserting the entry $n$ somewhere in $T'$. In order to satisfy (C2), the entry $n$ must be inserted either at the bottom or directly above a neighbour of $n$, and in order for $n$ to not be movable, by Lemma \ref{lem:legtoarm}, it must be either at the top or have a neighbour above it.
\end{proof}

Theorem \ref{thm:coefen} follows from the next corollary by summing over $1\leq j\leq n$.

\begin{corollary}
\label{qcor}
Let $G=([n],E)$ be a natural unit interval graph. Then for every $1\leq j\leq n$, we have
\begin{equation}
\sum_{T\in\Fix\varphi_G, \ T_1=j}q^{\inv T}=q^{j-1}[b_2]_q\cdots[b_n]_q.
\end{equation}
\end{corollary}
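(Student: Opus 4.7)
The plan is to induct on $n$, using Lemma~\ref{lem:fixind} to describe each $T\in\Fix\varphi_G$ as an insertion of the entry $n$ into a fixed point $T'\in\Fix\varphi_{G'}$, where $G'=G\setminus\{n\}$. Observe that $G'$ is again a natural unit interval graph and that $b_i(G')=b_i(G)$ for $i\le n-1$, so the inductive hypothesis applies to $G'$. The base case $n=1$ is trivial since the unique column tableau satisfies $T_1=1$, $\inv T=0$, and the empty product is $1$. The edge case $b_n=0$ is also immediate: Lemma~\ref{lem:fixind} forces $\Fix\varphi_G=\emptyset$ and $[0]_q=0$, so both sides vanish. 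I may thus assume $n\ge 2$ and $b_n\ge 1$.

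The first key step is to compute how each allowed insertion of $n$ changes the inversion count. Label the $b_n$ smaller neighbours of $n$ as $v_1,\ldots,v_{b_n}$ in the order in which they appear from top to bottom in $T'$. If $n$ is inserted directly above $v_k$, then the neighbours of $n$ ending up below $n$ in $T$ are exactly $v_k,\ldots,v_{b_n}$, contributing $b_n-k+1$ new inversions; insertion at the bottom contributes $0$, and insertion at the top contributes $b_n$. By Lemma~\ref{lem:fixind}, the allowed positions are the bottom, directly above $v_k$ for $k=2,\ldots,b_n$, and the top, with the last permitted precisely when $T'_1=v_1$.

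I would then split the sum over $T$ with $T_1=j$ based on whether $j<n$ or $j=n$. If $j<n$, then the top is disallowed (it would give $T_1=n$), so $T'_1=T_1=j$, and the permitted inversion increments take each value in $\{0,1,\ldots,b_n-1\}$ exactly once, giving $[b_n]_q$. Combined with the inductive hypothesis applied to $G'$, this yields
\begin{equation*}
\sum_{T\in\Fix\varphi_G,\,T_1=j}q^{\inv T}=[b_n]_q\cdot q^{j-1}[b_2]_q\cdots[b_{n-1}]_q=q^{j-1}[b_2]_q\cdots[b_n]_q.
\end{equation*}
If $j=n$, then $n$ is at the top, forcing $T'_1=v_1$ to be a neighbour of $n$, with inversion increment $b_n$. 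The natural unit interval condition~\eqref{eq:uicondition} implies that the smaller neighbours of $n$ are exactly the interval $\{n-b_n,\ldots,n-1\}$, so summing the inductive hypothesis over this range gives
\begin{equation*}
q^{b_n}\sum_{k=n-b_n}^{n-1}q^{k-1}[b_2]_q\cdots[b_{n-1}]_q=q^{b_n}\cdot q^{n-b_n-1}[b_n]_q\cdot[b_2]_q\cdots[b_{n-1}]_q=q^{n-1}[b_2]_q\cdots[b_n]_q.
\end{equation*}

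The main obstacle will be the careful bookkeeping required by Lemma~\ref{lem:fixind}'s ``excluding the highest neighbour'' clause, together with the optional top insertion. The delicate point is that for fixed $T'$ with $T_1=j<n$, the permitted insertion positions must give each inversion increment in $\{0,1,\ldots,b_n-1\}$ exactly once regardless of whether $T'_1=v_1$; the exclusion is precisely what prevents the increment $b_n$ from being picked up outside the $j=n$ case, so once this bookkeeping is verified, both cases reduce cleanly to the induction hypothesis and the geometric series identity $\sum_{k=n-b_n}^{n-1}q^{k-1}=q^{n-b_n-1}[b_n]_q$.
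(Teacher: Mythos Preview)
Your proof is correct and follows essentially the same approach as the paper's: induction on $n$ via Lemma~\ref{lem:fixind}, splitting into the cases $j<n$ (where the $b_n$ allowed insertion positions contribute increments $0,1,\ldots,b_n-1$) and $j=n$ (where $n$ goes on top of some $T'$ with $T'_1\in\{n-b_n,\ldots,n-1\}$). Your explicit labeling of the neighbours $v_1,\ldots,v_{b_n}$ by their height in $T'$ and your remark that $b_i(G')=b_i(G)$ for $i\le n-1$ make the bookkeeping slightly more transparent than the paper's version, but the two arguments are otherwise identical.
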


\begin{proof}
We use induction on $n$, the case of $n=1$ trivial, so suppose that $n\geq 2$. If $b_n=0$, then the statement holds by Lemma \ref{lem:fixind}, so assume that $b_n\geq 1$ and note that because $G$ is a natural unit interval graph, the neighbours of $n$ are exactly $n-b_n,\ldots,n-1$. If $j<n$, then by Lemma \ref{lem:fixind}, every tableau $T\in\Fix\varphi_G$ with $T_1=j$ arises from a tableau $T'\in\Fix\varphi_{G'}$ by inserting the entry $n$ in one of $b_n$ possible positions. Inserting $n$ in the $i$-th place from the bottom results in $(i-1)$ new inversions, so by our induction hypothesis, we have
$$\sum_{T\in\Fix\varphi_G, \ T_1=j}q^{\inv T}=(1+q+\cdots+q^{b_n-1})\sum_{T'\in\Fix\varphi_{G'}, \ T'_1=j}q^{\inv T}=q^{j-1}[b_2]_q\cdots[b_n]_q.$$
If $j=n$, then by Lemma \ref{lem:fixind}, every tableau $T\in\Fix\varphi_G$ with $T_1=n$ arises from a tableau $T'\in\Fix\varphi_G$ with $n-b_n\leq T'_1\leq n-1$ by inserting the entry $n$ at the top. Doing so produces $b_n$ new inversions, so by our induction hypothesis, we have
\begin{align*}\sum_{T\in\Fix\varphi_G, \ T_1=n}q^{\inv T}&=\sum_{i=n-b_n}^{n-1}\sum_{T'\in\Fix\varphi_{G'}, \ T'_1=i}q^{\inv T+b_n}\\&=q^{b_n}\sum_{i=n-b_n}^{n-1}q^{i-1}[b_2]_q\cdots[b_{n-1}]_q=q^{n-1}[b_2]_q\cdots[b_n]_q,\end{align*}
which completes the proof.
\end{proof}

When $q=1$, a similar formula holds for a wider family of graphs. 
If $G=(V,E)$ and $v\in V$ then $N(v)$ will denote the (open) neighborhood of $v$ which is all vertices adjacent to $v$.
We say that a graph $G$ has a {\em perfect elimination order(peo)} if there is a permutation $v_1, v_2,\ldots, v_n$ of $V$ such that for all $k\in[n]$ the vertices of $N(v_k)\cap\{v_1,v_2,\ldots,v_{k-1}\}$ form a clique. This condition is equivalent to $G$ being {\em chordal} which means that every cycle $C$ of length at least $4$ has a chord, that is, an edge of $G$ connecting two vertices of $C$ which are not adjacent along the cycle. It is easy to see that if $G$ is a natural unit interval graph then $1,2,\ldots,n$ is a perfect elimination order. But there are graphs with a peo, for example, $k\geq 3$ triangles with a vertex of each identified, such that no labeling of the vertices with $[n]$ gives a natural unit interval graph.

\begin{theorem}
\label{e_n:q=1}
Let $G$ have a perfect elimination order $v_1,v_2,\ldots, v_n$.  
 For $2\leq k\leq n$, let 
$$
b_k=\#\{ v_j \mid \text{$v_j\in N(v_k)$ and $j<k$}\}.
$$
Then the coefficient of $e_n$ in $X_G(\bm x)$ is the product
$$
c_n=n b_2 b_3 \cdots b_n.
$$
\end{theorem}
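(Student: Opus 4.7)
The plan is to recover the chromatic polynomial $\chi_G(t)$ from $X_G(\bm x)$ by a principal specialization, and then to extract $c_n$ by reading off the coefficient of $t^1$ on both sides. Setting $x_i = 1$ for $i \in [t]$ and $x_i = 0$ for $i > t$, each proper colouring with image in $[t]$ contributes $1$ while all others contribute $0$, so $X_G(1^t, 0, 0, \ldots) = \chi_G(t)$. Under the same specialization, $e_k(1^t, 0, \ldots) = \binom{t}{k}$, and hence for any partition $\lambda = (\lambda_1, \ldots, \lambda_\ell) \vdash n$,
$$
e_\lambda(1^t, 0, \ldots) = \prod_{i=1}^\ell \binom{t}{\lambda_i}
$$
is a polynomial in $t$ whose lowest-degree term has degree $\ell = \ell(\lambda)$. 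Consequently, among all partitions of $n$, only the one-part partition $\lambda = (n)$ contributes to the coefficient of $t^1$, so
$$
[t^1] \chi_G(t) \;=\; c_n \cdot [t^1] \binom{t}{n} \;=\; \frac{(-1)^{n-1}}{n}\, c_n.
$$

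Next I will compute $\chi_G(t)$ directly from the peo. Colouring the vertices $v_1, \ldots, v_n$ in order, when one reaches $v_k$ its earlier neighbours form a clique of size $b_k$ that is already coloured with $b_k$ distinct colours, leaving $t - b_k$ choices for $v_k$. A short induction then yields the classical product formula
$$
\chi_G(t) \;=\; \prod_{k=1}^n (t - b_k) \;=\; t \prod_{k=2}^n (t - b_k),
$$
using $b_1 = 0$. The coefficient of $t^1$ in this product is the constant term of $\prod_{k=2}^n (t - b_k)$, namely $(-1)^{n-1} b_2 b_3 \cdots b_n$.

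Equating the two expressions for $[t^1] \chi_G(t)$ gives $c_n = n\, b_2 b_3 \cdots b_n$, as claimed. There is no substantive obstacle; the only step requiring a moment's thought is the observation that $e_\lambda(1^t, 0, \ldots)$ vanishes to order $\ell(\lambda)$ at $t = 0$, which isolates the $e_n$ term as the sole contributor to the linear coefficient of $\chi_G(t)$ and thereby pins down $c_n$. Note that this argument is entirely independent of the sign-reversing involution $\varphi_G$ used in the proof of Theorem~\ref{thm:coefen}; the extra structure of a natural unit interval graph is not needed at $q = 1$ because the chromatic polynomial of any chordal graph already factors with the $b_k$ as its roots.
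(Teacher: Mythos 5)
Your proof is correct, but it reaches the result by a genuinely different route than the paper. The paper invokes Stanley's sink theorem (the case $j=1$ of \eqref{eq:ao}, identifying $c_n$ with the number of acyclic orientations of $G$ having a unique sink) together with the Greene--Zaslavsky theorem (that the number of such orientations with prescribed unique sink $v$ is independent of $v$ and equals $\bigl|[t^1]\chi(G;t)\bigr|$), and only then uses the perfect elimination order to factor $\chi(G;t)=t(t-b_2)\cdots(t-b_n)$. You instead bypass the acyclic-orientation machinery entirely: the principal specialization $X_G(1^t,0,\ldots)=\chi_G(t)$ together with $e_\lambda(1^t,0,\ldots)=\prod_i\binom{t}{\lambda_i}$, which vanishes to order exactly $\ell(\lambda)$ at $t=0$, isolates $\lambda=(n)$ as the sole contributor to $[t^1]\chi_G(t)$ and gives $[t^1]\chi_G(t)=\frac{(-1)^{n-1}}{n}c_n$; comparing with the constant term of $\prod_{k\geq 2}(t-b_k)$ yields $c_n=nb_2\cdots b_n$. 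Both arguments share the peo factorization of the chromatic polynomial as their computational core (and in both cases one should note that the factorization is verified for all large integers $t$ and then holds as a polynomial identity). What your approach buys is self-containedness and a transparent explanation of why only $e_n$ survives in the linear coefficient; in effect you reprove the $j=1$ case of Stanley's theorem along the way. What the paper's approach buys is the explicit combinatorial interpretation of $c_n$ as counting acyclic orientations with a unique sink, which it then connects to the fixed points of $\varphi_G$ via the bijection $\psi$ of Theorem~\ref{thm:aotabs}; your argument, while shorter, does not produce that interpretation.
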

\begin{proof}
By Stanley's Theorem on acyclic orientations \cite[Theorem 3.3]{chromsym}, $c_n$ is equal to the number of acyclic orientations of $G$ with a unique sink. Greene and Zaslavsky \cite[Theorem 7.3]{interpwhitney} showed that the number of acyclic orientations of $G$ with unique sink $v$ is independent of $v$ and equal to the absolute value of the linear coefficient of the chromatic polynomial $\chi(G;t)$. We make use of the perfect elimination order to count the number of ways to colour the vertices of $G$ in the order $v_1,v_2,\ldots,v_n$ using $t$ colours. There are $t$ choices for the colour of $v_1$. For every $2\leq k\leq n$, the $b_k$ neighbours of $v_k$ coloured already form a clique and so were given different colours.
It follows that there are $(t-b_k)$ choices for the colour of $v_k$ and
$$\chi(G;t)=t(t-b_2)\cdots(t-b_n).$$
Thus the number of acyclic orientations with unique sink $v$ is $b_2b_3\cdots b_n$. Since there are $n$ choices of unique sink $v$, there are $nb_2b_3\cdots b_n$ acyclic orientations with a unique sink and the theorem follows from Stanley's result.
\end{proof}

We conclude this section by relating $G$-tableaux of column shape to the set $\mathcal O(G)$ of acyclic orientations of $G$. Given a $G$-tableau $T\in\text{GTab}_{1^n}$, let $\psi(T)$ be the acyclic orientation of $G$ where every edge $ij$ is oriented from the entry occurring lower in $T$ to the entry occurring higher.

\begin{theorem}\label{thm:aotabs}

The map $\psi:\text{GTab}_{1^n}\rightarrow \mathcal O(G)$ is a bijection. If $\psi(T)=O$, then 
\begin{enumerate}
    \item[(a)] $\Inv T=\Asc O$,
    \item[(b)] the sinks of $O$ are precisely the entry $T_1$ along with the movable elements of $T$, and
    \item[(c)] the smallest sink of $O$ is $T_1$. 
\end{enumerate}

In particular, $\psi$ restricts to a bijection between $\Fix\varphi_G$ and the acyclic orientations of $G$ with a unique sink. 
\end{theorem}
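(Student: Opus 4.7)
The plan is to construct an explicit inverse of $\psi$, and then read off (a), (b), (c), and the final restriction from the structure of that inverse. Define $\psi^{-1}\colon\mathcal O(G)\to\text{GTab}_{1^n}$ by building a column tableau from top to bottom: at step $i$, let $T_i$ be the smallest sink of the orientation $O$ restricted to the vertices not yet placed. This already yields (c), since $T_1$ is by construction the smallest sink of $O$ itself.

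To see that the resulting $T$ satisfies (C2) on each consecutive pair $T_{i-1},T_i$, suppose $T_{i-1}T_i\notin E$. Then removing $T_{i-1}$ does not change the in/out-neighbourhood of $T_i$ in the remaining graph, so $T_i$ was already a sink one step earlier; since $T_{i-1}$ was chosen as the smallest sink at that step, $T_{i-1}<T_i$, which is (C2). The identity $\psi(\psi^{-1}(O))=O$ is then immediate: whenever a vertex $v$ is removed it is a sink of the current graph, so every still-unplaced neighbour of $v$ is placed below $v$ in $T$, which is exactly what $\psi$ records.

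The main obstacle is the opposite direction $\psi^{-1}(\psi(T))=T$, i.e.\ showing that for any $G$-tableau $T$ the entry $T_i$ really is the smallest sink of $\psi(T)$ restricted to $\{T_i,\ldots,T_n\}$. That $T_i$ is a sink of this restricted graph is immediate, since for any edge from $T_i$ to a later $T_k$ the entry $T_k$ is lower in $T$, so $\psi$ orients the edge $T_k\to T_i$. To see $T_i$ is smallest, suppose some $T_j$ with $j>i$ and $T_j<T_i$ is also a sink. Then $T_kT_j\notin E$ for every $i\le k<j$. Let $k^*$ be the smallest index in $\{i,\ldots,j\}$ with $T_{k^*}\le T_j$; since $T_i>T_j$ we have $k^*>i$, and $T_{k^*-1}>T_j\ge T_{k^*}$. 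Applied to $(T_{k^*-1},T_{k^*})$, (C2) forces $T_{k^*-1}T_{k^*}\in E$. If $k^*=j$ this directly contradicts $T_{j-1}T_j\notin E$; otherwise $T_{k^*}<T_j<T_{k^*-1}$ together with $T_{k^*}T_{k^*-1}\in E$ and the natural unit interval hypothesis \eqref{eq:uicondition} force $T_{k^*}T_j\in E$, again a contradiction. This is the delicate step, and it is precisely where the natural unit interval assumption is essential.

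With bijectivity in hand, (a) is tautological: $(i,j)\in\Inv T$ unpacks to $i<j$, $ij\in E$, and $i$ lying in a lower row than $j$, which is the same as $i\to j$ being an ascending arc of $\psi(T)$. For (b), Lemma~\ref{lem:legtoarm}(a) says that a leg entry is movable exactly when no entry above it in $T$ is adjacent to it, which in terms of $\psi(T)$ is exactly the condition of being a sink; combined with the fact that $T_1$ has nothing above it and is therefore always a sink, we obtain that the sinks of $\psi(T)$ are precisely $\{T_1\}$ together with the movable entries. The final statement then follows by combining (b) and (c) with the definition of $\Fix\varphi_G$: $T$ has no movable entries if and only if $T_1$ is the unique sink of $\psi(T)$.
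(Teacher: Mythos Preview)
Your proof is correct and follows essentially the same approach as the paper: both build $\psi^{-1}$ by iteratively stripping off the smallest sink, verify (C2) for the resulting column, and derive (a) and (b) from the construction together with Lemma~\ref{lem:legtoarm}(a). The paper deduces (c) instead from Lemma~\ref{lem:legtoarm}(b) (movable entries exceed everything above them) and simply declares the two compositions ``routine and so omitted''; you actually carry out the verification that $\psi^{-1}(\psi(T))=T$, and your argument for why $T_i$ is the \emph{smallest} sink of the restricted orientation---which is exactly where the natural unit interval hypothesis enters---is the substantive content the paper leaves to the reader, with (c) falling out as the $i=1$ case.
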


\begin{proof}
By construction, because we orient edges from lower entries to higher entries, $O$ is indeed an acyclic orientation of $G$, inversions of $T$ are exactly ascents of $O$, and $T_1$ is a sink of $O$. By Lemma \ref{lem:legtoarm} (a), an entry $j$ in the leg of $T$ is movable if and only if it has no neighbours above it, which is exactly the condition for $O$ to have no outgoing edges from $j$ and for $j$ to be a sink. Also by Lemma \ref{lem:legtoarm}, every movable entry $T$ is larger than the entries above it, so the smallest sink of $O$ is $T_1$.

It remains to show that the map $\psi$ is a bijection. Given an acyclic orientation $O$, we construct the tableau $T=\psi^{-1}(O)$ as follows. We let $T_1$ be the smallest sink $j$ of $O$ and  delete vertex $j$ to obtain an acyclic orientation $O'$. Then we iterate this process, letting $T_2$ be the smallest sink of $O'$, and so on. The tableau $T$ satisfies (C2) because if $T_i>T_{i+1}$, the smaller entry $T_{i+1}$ became a sink only after vertex $T_i$ was removed, so $T_{i+1}T_i\in E$ and $T$ is indeed a $G$-tableau. 
The proof that the compositions $\psi\circ \psi^{-1}$ and $\psi^{-1}\circ\psi$ are identity maps is routine and so omitted.
\end{proof}

\section{Further directions}\label{sec:further}

We conclude by proposing some further avenues of study. It may be fruitful to explore the expansion of $X_G(\bm x)$ in other symmetric function bases. Tom used the power sum basis to produce a signed $e$-expansion of $X_G(\bm x)$ \cite[Theorem 5.10]{qforesttriples} and of $X_G(\bm x;q)$ whenever $G$ is a natural unit interval graph \cite[Theorem 3.4]{qforesttriples}. Cho and van Willigenburg found a way to generate many new bases for symmetric functions.

\begin{theorem}
\cite[Theorem 5]{chrombases} Fix a sequence of connected graphs $G_1,G_2,\ldots$ such that $G_k$ has $k$ vertices for every $k$. For a partition $\lambda=(\lambda_1,\ldots,\lambda_\ell)$, let $G_\lambda$ be the disjoint union of graphs $G_{\lambda_1}\cdots G_{\lambda_\ell}$.
Then the set $\{X_{G_\lambda}(\bm x): \lambda\vdash n\}$ is a basis for the space of symmetric functions of degree $n$. 
\end{theorem}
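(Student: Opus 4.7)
The plan is to prove the statement by showing that the expansion matrix of $\{X_{G_\lambda}\}_{\lambda \vdash n}$ in the power sum basis $\{p_\lambda\}_{\lambda \vdash n}$ is triangular with nonzero diagonal entries, hence invertible. First I would exploit the well known multiplicativity of the chromatic symmetric function under disjoint union, which gives $X_{G_\lambda} = X_{G_{\lambda_1}} X_{G_{\lambda_2}} \cdots X_{G_{\lambda_\ell}}$. For each connected graph $H$ on $k$ vertices, the power sum expansion of $X_H$ involves only $p_\mu$ with $\mu \vdash k$, since $X_H$ is homogeneous of degree $k$.

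Next I would analyze what happens upon taking the product. Using $p_\mu \cdot p_\nu = p_{\mu \cup \nu}$ (where $\mu \cup \nu$ denotes the multiset union of parts), if each $X_{G_{\lambda_i}} = \sum_{\mu^{(i)} \vdash \lambda_i} r_{\mu^{(i)}}^{(i)} p_{\mu^{(i)}}$, then
\begin{equation*}
X_{G_\lambda} = \sum_{\mu^{(1)} \vdash \lambda_1, \dots, \mu^{(\ell)} \vdash \lambda_\ell} \Big(\prod_i r_{\mu^{(i)}}^{(i)}\Big) \, p_{\mu^{(1)} \cup \cdots \cup \mu^{(\ell)}}.
\end{equation*}
Any partition $\nu$ appearing on the right hand side refines $\lambda$, in the sense that its parts can be grouped into subsets summing to the $\lambda_i$. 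Moreover, $\nu = \lambda$ forces each $\mu^{(i)} = (\lambda_i)$, so the coefficient of $p_\lambda$ in $X_{G_\lambda}$ equals $\prod_{i=1}^\ell r_{\lambda_i}$, where $r_k$ denotes the coefficient of $p_k$ in $X_{G_k}$. Since refinement order refines dominance order (a standard partition-theoretic fact I would briefly verify by comparing partial sums), the transition matrix from $\{X_{G_\lambda}\}$ to $\{p_\lambda\}$ is triangular with respect to dominance with diagonal entries $\prod_i r_{\lambda_i}$.

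The main step, and the only one requiring content beyond bookkeeping, is to show that $r_k \neq 0$ for every connected graph $G_k$ on $k$ vertices. I would argue this by specializing to the chromatic polynomial: since $p_m(\underbrace{1,\dots,1}_{t}) = t$, we have $X_H(1^t) = \chi(H,t)$ and only terms $p_\mu$ with $\ell(\mu) = 1$, namely $p_k$, contribute to the linear coefficient of $\chi(H,t)$. Thus $r_k$ equals the coefficient of $t$ in $\chi(G_k,t)$, which is classically known to be nonzero for connected graphs; indeed, by the result of Greene and Zaslavsky already cited in the paper, its absolute value counts the acyclic orientations of $G_k$ with a unique fixed sink, a quantity that is strictly positive.

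Once $r_k \neq 0$ for every $k \geq 1$ is established, all diagonal entries $\prod_i r_{\lambda_i}$ of the transition matrix are nonzero, the matrix is invertible, and therefore $\{X_{G_\lambda} : \lambda \vdash n\}$ is a basis for the space of symmetric functions of degree $n$. The hard part is really the nonvanishing of $r_k$; everything else is a triangularity argument that is forced once one thinks of refinement as the right partial order.
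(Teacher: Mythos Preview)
The paper does not supply its own proof of this theorem; it is simply quoted as a cited result from Cho and van Willigenburg, so there is nothing in the paper to compare your argument against directly.

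That said, your argument is correct and is essentially the standard one (and indeed the argument in the original Cho--van Willigenburg paper): multiplicativity of $X_G$ under disjoint union, together with $p_\mu p_\nu = p_{\mu\cup\nu}$, shows that only $p_\nu$ with $\nu$ a refinement of $\lambda$ appear in $X_{G_\lambda}$, and the coefficient of $p_\lambda$ itself is $\prod_i r_{\lambda_i}$ where $r_k$ is the coefficient of $p_k$ in $X_{G_k}$. Your identification of $r_k$ with the linear coefficient of the chromatic polynomial via $p_\mu(1^t)=t^{\ell(\mu)}$, and its nonvanishing for connected graphs, is exactly right. One small remark: the passage through dominance order is unnecessary, since refinement is already a partial order and triangularity with respect to any linear extension of refinement suffices; but the detour is harmless.
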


For example, we could take $G_k$ to be the \emph{star graph} $S_k$, which is the tree with $k$ vertices that has a vertex of degree $(k-1)$. Aliste-Prieto, de Mier, Orellana, and Zamora \cite[Theorem 3.2]{markedgraphs} found a combinatorial formula for the expansion of $X_G(\bm x)$ in the star basis $\{X_{S_\lambda}(\bm x)\}$. We could try using a change-of-basis between stars and elementary symmetric functions to further study $X_G(\bm x)$.

One can combine the Schur expansion of $X_G(\bm x;q)$~\eqref{eq:s} with the dual Jacobi-Trudi determinant~\eqref{eq:jt} to obtain a signed expansion into elementary symmetric functions for any natural unit interval graph.  The terms in this expansion  count pairs $(T,\pi)$ where $T$ is a $G$-tableau and $\pi$ is the permutation indicating the term in the determinant which was used, with the usual sign of a permutation as weight.  One could then try to prove Conjecture~\ref{SWconj} by finding a sign-reversing, inv-preserving involution on these pairs which could involve a new notion of movable element.  Cho and Huh~\cite{chromstoe} have used this approach to show $e$-positivity  of $X_G(\bm x;q)$ for natural unit interval graphs with $\alpha(G)= 2$.  Also, Cho and Hong~\cite{chrombounce3} showed $e$-positivity of $X_G(\bm x)$
when $\alpha(G)=3$.  What other classes of graphs can be handled in this manner?

When $G$ is a natural unit interval graph, Shareshian and Wachs conjectured that $X_G(\bm x;q)$ is not only $e$-positive but also \emph{$e$-unimodal}, meaning that for every $\lambda$ the coefficients of the polynomial $c_\lambda(q)=a_iq^i+\cdots+a_jq^j$ satisfy 
$$a_i\leq a_{i+1}\leq\cdots\leq a_{k-1}\leq a_k\geq a_{k+1}\geq\cdots\geq a_{j-1}\geq a_j$$
for some $i\leq k\leq j$. We could ask whether $X_G(\bm x;q)$ is \emph{$e$-log-concave}, meaning that every polynomial $c_\lambda(q)$ satisfies the stronger property that $a_\ell^2\geq a_{\ell+1}a_{\ell-1}$ for every $\ell$.   We have checked by computer that this holds for every natural unit interval graph with at most $10$ vertices.

\begin{conjecture}
Let $G$ be a natural unit interval graph. Then $X_G(\bm x;q)$ is $e$-log-concave.
\end{conjecture}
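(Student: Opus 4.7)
My plan is to first secure the conjecture in all cases where an explicit formula for $c_\la(q)$ is already in hand, and then attempt to build outward by structural arguments. By Theorem~\ref{thm:coefen}, the leading coefficient satisfies $c_n(q)=[n]_q[b_2]_q\cdots[b_n]_q$, which is a product of $q$-integers. Since each $[k]_q$ has all coefficients equal to $1$ (hence trivially log-concave with no internal zeros) and the product of log-concave polynomials with positive coefficients in this class is again log-concave (by the standard Newton-style interlacing argument), log-concavity of $c_n(q)$ follows immediately. Similarly, Corollary~\ref{cor:twocols} combined with the explicit formula $c_{2^a1^b}(q)=(1+q)^e$ for disjoint unions of paths handles every $\la$ with $\la_1\le 2$, since $(1+q)^e$ is log-concave by log-concavity of binomial coefficients. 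The explicit formula for $K_{a,b}$ in Example~\ref{ex:kab} likewise reduces to log-concavity of $q^{n-k}[2k-n]_q$, which is clear.

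The next step would be to reduce to connected graphs and attempt an induction. Because $X_{G_1\sqcup G_2}=X_{G_1}X_{G_2}$, and because the $e$-basis is multiplicative under the usual product on symmetric functions, a disjoint union corresponds to a convolution-style combination of the $c_\la(q)$. If one establishes that each individual $c_\nu(q)$ for connected natural unit interval graphs is the Hilbert series of a representation with an appropriate hard-Lefschetz/Hodge structure (extending the Harada--Precup picture via regular semisimple Hessenberg varieties and the dot action of Shareshian--Wachs), then log-concavity would follow from classical Hodge-theoretic inequalities, and the convolution step is handled by the Lorentzian polynomial framework of Br{\"a}nd{\'e}n--Huh. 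Concretely, I would try to show that $\sum_\la c_\la(q)\, e_\la$ lifts to a Lorentzian polynomial in the $e_\la$ variables (after suitable normalization by $\la!$), since Lorentzian polynomials enjoy strong log-concavity properties along every line of direction.

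A complementary, more combinatorial attack would exploit Theorem~\ref{thm:Gtab}: combine the Schur expansion with the dual Jacobi--Trudi identity \eqref{eq:jt} to get a signed formula $c_\la(q)=\sum_{(T,\pi)}\sgn(\pi)q^{\inv T}$, as sketched for $c_n(q)$ in Section~\ref{sec:Schur}. One would then seek a sign-reversing, $\inv$-preserving involution on the surviving pairs that simultaneously interleaves them in a way witnessing log-concavity, perhaps by extending the notion of a movable element to non-hook shapes. This would generalize the proof of Corollary~\ref{qcor} and would simultaneously imply both Schur-positivity's refinement to $e$-positivity (Conjecture~\ref{SWconj}) and $e$-unimodality.

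The principal obstacle is that $e$-log-concavity is strictly stronger than both Conjecture~\ref{SWconj} and $e$-unimodality, neither of which is known in full generality; thus any successful approach must simultaneously resolve the Shareshian--Wachs conjecture. The Hodge-theoretic route is appealing but requires identifying a geometric object whose cohomology ring decomposes along $e_\la$ rather than $s_\la$, which is exactly the step that has resisted attack. The involution-based route stumbles on the fact that log-concavity is not preserved under base change from Schur to elementary, so the signed cancellation must be engineered to respect \emph{pairs} of inversion statistics simultaneously. For these reasons I would treat the two- and three-column cases (where the combinatorics is more tractable) as the first substantive test beyond the explicit formulas above.
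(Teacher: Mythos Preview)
The statement you are addressing is a \emph{conjecture} in the paper, not a theorem; the paper offers no proof. It is stated in Section~\ref{sec:further} as an open problem, supported only by a computer check up to $n=10$ and by the example of $K_{a,b}$ (Example~\ref{ex:kab}), where each $c_\la(q)$ is a product of $q$-integers and hence log-concave. Your proposal is therefore not competing with any argument in the paper.

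What you have written is a research plan, not a proof, and you are candid about this. The special cases you settle are correct and match or slightly extend what the paper records: the formula $c_n(q)=[n]_q[b_2]_q\cdots[b_n]_q$ from Theorem~\ref{thm:coefen} is indeed log-concave as a product of $q$-integers; the two-column case $c_{2^a1^b}(q)=(1+q)^e$ is log-concave by log-concavity of binomial coefficients; and the $K_{a,b}$ case is exactly the paper's own example. Your diagnosis of the general obstacle is also accurate: $e$-log-concavity implies both $e$-positivity and $e$-unimodality, so a full proof would in particular resolve Conjecture~\ref{SWconj}, which remains open.

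One caution on your reduction to connected graphs: for a disjoint union $G=H_1\sqcup H_2$ the coefficient $c_\la(q)$ is a \emph{sum} $\sum c_\nu(q)c_\rho(q)$ over all ways to split the parts of $\la$ between $\nu$ and $\rho$, not a single convolution. Log-concavity is not in general preserved under such sums, so even if every $c_\nu(q)$ for connected graphs were known to be log-concave, the disconnected case would not follow automatically; the Lorentzian framework you invoke would need to control the full multivariate structure, not merely each $c_\la(q)$ separately.
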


\begin{example}
For the family of graphs $K_{a,b}$ from Example \ref{ex:kab}, we saw in \eqref{eq:kab} that every coefficient in the $e$-expansion of $X_{K_{a,b}}(\bm x;q)$ is a product of $q$-integers.  So $X_{K_{a,b}}(\bm x;q)$ is $e$-log-concave.
\end{example}

\section{Acknowledgments}
The authors would like to thank Zachary Hamaker, Richard Stanley, and Vincent Vatter for helpful discussions.

\printbibliography

\end{document}